\documentclass[11 pt, reqno]{amsart}
\usepackage{amsfonts}
\usepackage{amsthm}
\usepackage{amssymb}
\usepackage{latexsym}
\usepackage{amsmath}
\usepackage{mathrsfs}
\usepackage{dsfont}
\usepackage{a4wide}
\usepackage[usenames,dvipsnames]{xcolor}

\usepackage[all]{xy}
\usepackage{graphicx}

\theoremstyle{plain}
\newtheorem{theorem}{Theorem}
\newtheorem{tw}[theorem]{Theorem}
\newtheorem{lem}[theorem]{Lemma}
\newtheorem{propn}[theorem]{Proposition}
\newtheorem{cor}[theorem]{Corollary}

\theoremstyle{definition}

\newcommand{\bc}{\mathbb{C}}
\newcommand{\br}{\mathbb{R}}

\newcommand\conv{\star}

\newcommand{\Nphi}{\mathcal{N}_\phi}
\newcommand{\inv}{^{-1}}

\newcommand\Lone{{L}^1}
\newcommand\Lonesharp{{L}_\sharp\sp1}
\newcommand\Ltwo{{L}^2}
\newcommand\Linf{{L}^\infty}
\let\C=C
\newcommand{\M}{\mathit{M}}
\newcommand{\G}{\mathbb{G}}

\newcommand{\ot}{\otimes}

\newcommand{\id}{\mathrm{id}}
\newcommand{\Com}{\Delta}
\newcommand{\blg}{\mathsf{B}}
\newcommand{\Clg}{\mathsf{C}}
\newcommand{\Dlg}{\mathsf{D}}
\newcommand{\WW}{{\mathds{V}\!\!\text{\reflectbox{$\mathds{V}$}}}}
\newcommand{\Ww}{\mathds{W}}
\newcommand{\ww}{\mathrm{W}}
\newcommand{\QG}{\G}
\newcommand{\QH}{\mathbb{H}}
\newcommand{\hQG}{\widehat{\QG}}

\newcommand{\Pu}{\textup{Prob}^u}
\newcommand\wot{\mathop{\overline\otimes}}

\newenvironment{rlist}
{

\begin{enumerate}}
{\end{enumerate}}

\begin{document}

\title{Idempotent states on locally compact quantum groups II}

\keywords{Locally compact quantum group, idempotent state,
invariant subalgebra}

\subjclass[2000]{Primary 46L65, Secondary 43A05, 46L30, 60B15}

\begin{abstract}
  \noindent
 Correspondence between idempotent states and expected right-invariant
 subalgebras is extended to non-coamenable, non-unimodular locally
 compact quantum groups; in particular left convolution operators are
 shown to automatically preserve the right Haar weight.
\end{abstract}

\author{Pekka Salmi}
\address{Department of Mathematical Sciences,
University of Oulu, PL 3000, FI-90014 Oulun yliopisto, Finland}
\email{pekka.salmi@iki.fi}

\author{Adam Skalski}
\address{Institute of Mathematics of the Polish Academy of Sciences,
ul.~\'Sniadeckich 8, 00--656 Warszawa, Poland }
\email{a.skalski@impan.pl}

\maketitle

Idempotent states on finite and compact quantum groups, generalising
the idempotent probability measures on compact groups, have been
studied in a series of papers [FS$_{1-2}$] and \cite{FST} (see also
the survey \cite{Pekkasurvey}, where one can find the probabilistic
and harmonic-analytic motivations behind investigating such
objects). In the article \cite{SSIdem} the main results related to
idempotent states were extended to the \emph{locally compact} case,
under the assumption that the locally compact quantum group $\QG$  in
question is \emph{unimodular} and \emph{coamenable}. The first of
these properties is automatically satisfied in the compact case,
whereas the second, indeed assumed in \cite{FST} and \cite{FS},  means
that all idempotent states may be viewed as bounded functionals on the
C*-algebra $\C_0(\QG)$, and not, as is the case in general, on its
universal counterpart, $\C_0^u(\QG)$.

In this short paper we show that in fact the main results of
\cite{SSIdem} hold also when both of these assumptions are
dropped. We also connect the von Neumann algebraic picture, i.e.\ working
with the algebra $\Linf(\QG)$ of `essentially bounded functions' on
$\QG$, with the C*-algebraic one.  Thus we prove the following
theorem (the detailed explanation of the terms used below may be found
in the main body of the paper; note that expected C*-subalgebras are automatically non-zero).

\begin{tw}\label{maintheorem}
Let $\QG$ be an arbitrary locally compact quantum group. There is a one-to-one correspondence between the following objects:
\begin{rlist}
\item idempotent states in $\Pu(\QG)$;
\item right-invariant expected C*-subalgebras of $\C_0(\QG)$;
\item right-invariant expected von Neumann subalgebras of $\Linf(\QG)$;
\item left-invariant expected C*-subalgebras of $\C_0(\QG)$;
\item left-invariant expected von Neumann subalgebras of $\Linf(\QG)$.
\end{rlist}
\end{tw}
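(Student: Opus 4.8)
\emph{Strategy.} The plan is to make the idempotent state of (i) the hub and to produce each of the four families of subalgebras from it by convolution, checking invertibility at each stage. Given an idempotent state $\omega\in\Pu(\QG)$, I would form the left convolution operator $L_\omega=(\omega\ot\id)\circ\wt\cop$ on $\Linf(\QG)$, where $\wt\cop\col\Linf(\QG)\to M(\C_0^u(\QG))\wot\Linf(\QG)$ is the half-universal comultiplication that lets $\omega$ be paired on the universal leg. Coassociativity turns the relation $\omega\conv\omega=\omega$ into $L_\omega\circ L_\omega=L_\omega$; as $L_\omega$ is unital, normal and completely positive, Tomiyama's theorem makes it a normal conditional expectation onto its range $N_\omega:=L_\omega(\Linf(\QG))$. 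A further coassociativity computation gives $\cop(N_\omega)\sub N_\omega\wot\Linf(\QG)$, so that $N_\omega$ is a right-invariant expected von Neumann subalgebra, yielding the passage (i) $\to$ (iii).

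\emph{From (iii) to (i).} Conversely, let $N$ be right-invariant and expected, with normal conditional expectation $E\col\Linf(\QG)\to N$. Right-invariance forces $E$ to intertwine the coproduct on the appropriate leg, and the aim is to prove $E=L_\omega$ for a unique idempotent state $\omega$ on $\C_0^u(\QG)$; injectivity of $\omega\mapsto L_\omega$ then secures uniqueness, while the universal formulation is exactly what makes the state exist in the absence of coamenability. I would construct $\omega$ by pairing $E$ with the right Haar weight $\psi$ through its GNS representation and reading off a functional on the universal algebra, then verify $\omega\conv\omega=\omega$ from idempotency of $E$. Checking that the two assignments are mutually inverse completes the bijection (i) $\leftrightarrow$ (iii).

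\emph{C*-level and left--right symmetry.} For (ii) $\leftrightarrow$ (iii) I would pass between $N_\omega$ and $N_\omega\cap\C_0(\QG)$: since $\omega$ is universal and $\wt\cop$ carries $\C_0(\QG)$ into the relevant multiplier algebra, $L_\omega$ restricts to a nondegenerate conditional expectation on $\C_0(\QG)$ whose range is weak* dense in $N_\omega$, so that weak* closure inverts the intersection; nondegeneracy of this restriction is precisely what guarantees that the expected C*-subalgebra is non-zero. Finally (iv) $\leftrightarrow$ (ii) and (v) $\leftrightarrow$ (iii) are obtained by transporting everything through the unitary antipode $R$, a normal $*$-anti-automorphism of $\Linf(\QG)$ that reverses the legs of $\cop$ and is compatible with the reduction to $\C_0(\QG)$; on the level of states this replaces $\omega$ by $\omega\circ R^u$, again idempotent, and interchanges right- with left-invariance.

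\emph{Main obstacle.} The decisive point — and the place where \cite{SSIdem} invoked unimodularity — is that the whole construction must be run through the right Haar weight $\psi$ naturally attached to right-invariant subalgebras, yet $L_\omega$ preserves $\psi$ only conditionally: from left-invariance of the left Haar weight one gets $\varphi\circ L_\omega=\varphi$ for free, whereas $\psi\circ L_\omega=\psi$ is not automatic and must be proved. I expect this to be the core lemma, established by writing $\psi=\varphi\circ R$ and controlling $R\circ L_\omega$ via $\cop\circ R=\sigma\,(R\ot R)\,\cop$ together with the modular element $\delta$, with an analytic (KMS) argument to handle the unboundedness of the weights. Once left convolution operators are shown to preserve the right Haar weight automatically, the non-unimodular case proceeds just as the unimodular one, and all the correspondences above go through.
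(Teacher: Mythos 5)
Your overall architecture --- making the idempotent state the hub, realising it as the conditional expectation $L_\omega$, and isolating $\psi\circ L_\omega=\psi$ as the decisive lemma --- matches the paper's, and your observation that the right-weight preservation is where unimodularity was previously used is exactly right. However, there are two genuine gaps. First, in the passage (i)\,$\to$\,(iii) you cannot invoke Tomiyama's theorem directly: that theorem applies to a norm-one projection onto a C*-\emph{subalgebra}, whereas the range of an idempotent unital completely positive map is a priori only an operator system (it carries a Choi--Effros product, but need not be multiplicatively closed in $\Linf(\QG)$). The paper must first prove the identity $L_\omega(a)L_\omega(b)=L_\omega(L_\omega(a)b)$, which it does on the universal level using the quantum cancellation rules (Lemma~\ref{idempexp}, following \cite{SSIdem}); without this step the ``conditional expectation onto its range'' claim is unjustified. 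Second, and more seriously, your construction of $\omega$ in the direction (iii)\,$\to$\,(i) --- ``pairing $E$ with $\psi$ through its GNS representation'' --- does not produce a state on $\C_0^u(\QG)$. The actual mechanism, and the essential non-coamenable input, is the representation theorem for normal completely positive left multipliers of \cite{jnr} and \cite{daws} (Theorem~\ref{Matt}): one uses $\psi$-preservation of $E$ to show $ER_\nu=R_\nu E$ for $\nu\in\Lonesharp(\QG)$, deduces that $E_*$ is a left $\Lone(\QG)$-module map, and only then concludes $E=L_\omega$ for some $\omega\in\Pu(\QG)$; the C*-level version additionally needs an extension lemma (Lemma~\ref{lem::C0multipliers}) exploiting that $\Lone(\QG)$ is an ideal in $\C_0(\QG)^*$. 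Your proposal never identifies this theorem, which is the heart of the converse direction.

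On the key lemma itself, your sketch via $\psi=\phi\circ R$ is a genuinely different route from the paper's, which instead uses $\psi=\phi(\delta^{1/2}\,\cdot\,\delta^{1/2})$ and the observation that idempotency forces $\omega(\delta_u^{it})=1$, hence puts $\delta_u^{it}$ in the multiplicative domain of $\omega$ and makes $L_\omega$ commute with multiplication by $\delta^{it}$ (followed by an analytic-continuation argument to reach $\delta^{1/2}$). Your route would require knowing $\omega\circ R^u=\omega$ \emph{before} proving weight preservation, whereas the paper derives the $R^u$-invariance (Proposition~\ref{scaling}) afterwards; you would need to check carefully that the antipode-invariance can be established first without circularity. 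The paper's modular-element argument avoids this ordering problem entirely.
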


The key part in the proof of the above result is based on the
following theorem.

\begin{tw}\label{thm:weightpreserv}
Let $\omega \in \Pu(\QG)$ be an idempotent state. Then the associated
left multiplier $L_\omega$ preserves both the left and the right Haar
weight.
\end{tw}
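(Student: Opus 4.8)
The plan is to treat the two Haar weights asymmetrically, since preservation of one of them is essentially formal while the other carries the whole content. Writing $L_\omega=(\omega\ot\id)\Com$, preservation of the left Haar weight $\varphi$ is immediate from left invariance: formally $\varphi(L_\omega(x))=(\omega\ot\varphi)\Com(x)=\omega\big((\id\ot\varphi)\Com(x)\big)=\omega(\varphi(x)1)=\varphi(x)$, and the only real work is to justify the interchange for the unbounded weight $\varphi$ and for $\omega$ living on the universal algebra $\C_0^u(\QG)$; this I would handle on the GNS level, identifying $L_\omega$ with an operator built from the (half-universal) regular corepresentation and checking the identity on a core of $\varphi$. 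The substantive assertion is therefore that $L_\omega$ also preserves the \emph{right} Haar weight $\psi$. Classically this is the statement that an idempotent probability measure is concentrated on a compact, hence unimodular, subgroup, so that the modular factor $\int\Delta_G(g)\inv\,d\omega(g)$ which a priori relates $\psi\circ L_\omega$ to $\psi$ equals $1$.

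The quantum counterpart of this vanishing of the modular factor is the scalar identity $\omega(\delta^{it})=1$ for all $t\in\R$, where $\delta$ denotes the (universal) modular element, and this is where idempotency enters cleanly. Since $\delta$ is group-like, $\Com(\delta^{it})=\delta^{it}\ot\delta^{it}$, so $t\mapsto\omega(\delta^{it})$ is a continuous homomorphism from $\R$ into the unit circle, and idempotency forces $\omega(\delta^{it})=(\omega\star\omega)(\delta^{it})=(\omega\ot\omega)\Com(\delta^{it})=\omega(\delta^{it})^2$, whence $\omega(\delta^{it})=1$. At the same point I would record the scaling invariance $\omega\circ\tau_t=\omega$ of idempotent states, which together with $\Com\circ\tau_t=(\tau_t\ot\tau_t)\Com$ shows that $L_\omega$ commutes with $\tau_t$; this compatibility with the scaling group is what lets one move the modular data past $L_\omega$.

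The final step is to upgrade $\varphi$-preservation to $\psi$-preservation using the Connes cocycle $[D\psi:D\varphi]_t=\nu^{it^2/2}\delta^{it}$ of Kustermans--Vaes: the two weights differ only through a function of $\delta^{it}$, and since $L_\omega$ preserves $\varphi$ and commutes with $\tau_t$, the obstruction to $\psi\circ L_\omega=\psi$ is measured exactly by $\omega(\delta^{it})$, which is $1$ by the previous paragraph. In the unimodular case of \cite{SSIdem} this cocycle is trivial and the statement is automatic, so the present argument is precisely the modular refinement of that one. I expect the main obstacle to be analytic rather than conceptual: giving rigorous meaning to moving $\delta^{it}$ through $L_\omega$ when $\delta$ is unbounded and $\omega$ acts on the universal algebra, and verifying the cocycle computation on a suitable core. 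It is worth stressing that the soft alternative---realising $L_\omega$ as a normal conditional expectation onto the expected invariant subalgebra and invoking Takesaki's theorem---does not by itself suffice, since commutation of a conditional expectation with $\sigma^\psi$ does not imply preservation of $\psi$; this is exactly why the explicit cocycle identity, trivialised by $\omega(\delta^{it})=1$, is needed.
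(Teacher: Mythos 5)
Your reduction of the problem to the scalar identity $\omega(\delta_u^{it})=1$ is exactly the paper's key step, derived in the same way (group-likeness of $\delta_u^{it}$ plus idempotency force $\omega(\delta_u^{it})\in\{0,1\}$, and continuity in $t$ together with the value $1$ at $t=0$ settles it), and the overall strategy of transporting $\phi$-preservation to $\psi$-preservation through the modular element is also the paper's. The gap lies in how you propose to carry out that transport. First, the relevant commutation is not with the scaling group: what is needed is the \emph{operator} identity $L_\omega(\delta^{it}a)=\delta^{it}L_\omega(a)$ (and its right-handed version), and the paper obtains it not from $\tau_t$ but from the observation that $\omega(\delta_u^{it}\delta_u^{-it})=1=\omega(\delta_u^{it})\omega(\delta_u^{-it})$ places $\delta_u^{it}$ in the multiplicative domain of $\omega$, hence $\delta_u^{it}\ot\delta_u^{it}$ in that of $\omega\ot\id$; applying $\omega\ot\id$ to $\Com_u(\delta_u^{it}a)=(\delta_u^{it}\ot\delta_u^{it})\Com_u(a)$ then gives the identity, which descends to the reduced level via \eqref{reducingrelation}. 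Your appeal to $\omega\circ\tau_t^u=\omega$ is, besides not being the mechanism that is needed, also something the paper proves only later (Proposition~\ref{scaling}), as a consequence of $L_\omega$ being a $\phi$-preserving conditional expectation.

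Second, and more seriously, the assertion that ``the obstruction to $\psi\circ L_\omega=\psi$ is measured exactly by $\omega(\delta^{it})$'' is precisely the statement that needs proof, and for real $t$ it is not literally what appears: classically the obstruction is $\int\Delta_G\,d\omega$, i.e.\ the value at the analytic continuation point $t=-i$, not at real $t$. Turning $\omega(\delta^{it})=1$ into $\psi\circ L_\omega=\psi$ requires an analytic continuation from the unitary group $(\delta^{it})_{t\in\br}$ to the positive square root $\delta^{1/2}$. This is the content of the paper's Lemma~\ref{deltabehaviour}: for $a\in\C_0(\QG)$ with $a\delta^{1/2}$ bounded and $\overline{a\delta^{1/2}}\in\Nphi$, one applies Proposition 9.24 of \cite{stratila-zsido} to the two wo-continuous analytic families $z\mapsto L_\omega(\delta^{z}a^*a\delta^{z})$ and $z\mapsto\delta^{z}L_\omega(a^*a)\delta^{z}$ on the strip $0\le\textup{Re}\,z\le 1/2$ (normality of $L_\omega$ is what makes the first family admissible); they agree on the boundary line $i\br$ by the commutation identity above, whence $L_\omega((a\delta^{1/2})^*\overline{a\delta^{1/2}})=\overline{\delta^{1/2}L_\omega(a^*a)\delta^{1/2}}$. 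Applying $\phi$ and using the density of such elements $a^*a$ together with the uniqueness of the right Haar weight then finishes the proof. Your cocycle formulation $[D\psi:D\phi]_t=\nu^{it^2/2}\delta^{it}$ is an equivalent packaging of $\psi=\phi(\delta^{1/2}\cdot\,\delta^{1/2})$, but it does not by itself supply this continuation step, which is where the actual work lies. (Your closing remark that the soft Takesaki-type argument cannot suffice is correct and well taken.)
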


Note one immediate corollary, which admits an obvious left-sided counterpart.

\begin{cor}\label{corollary}
A right-invariant $\psi$-expected C*-subalgebra of $\C_0(\QG)$ is
automatically expected.
Similarly, a right-invariant $\psi$-expected von Neumann subalgebra of
$\Linf(\QG)$ is automatically expected.
\end{cor}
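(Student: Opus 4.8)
The plan is to derive the corollary from the main correspondence together with the weight-preservation statement of Theorem~\ref{thm:weightpreserv}; once that theorem is available the argument becomes essentially bookkeeping, which is why it is flagged as immediate. I shall treat the von Neumann algebraic assertion first and deduce the C*-algebraic one from it. So let $N\sub\Linf(\QG)$ be a right-invariant von Neumann subalgebra which is $\psi$-expected, and let $E\col\Linf(\QG)\to N$ be a normal conditional expectation preserving the right Haar weight $\psi$. The goal is to promote this into the canonical, bi-invariant and weight-preserving conditional expectation demanded by the notion of an expected subalgebra in Theorem~\ref{maintheorem}.

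First I would recover an idempotent state from the pair $(N,E)$. Since $N$ is right-invariant, $E$ commutes with all right translations $(\id\ot\mu)\Delta$; a normal conditional expectation with this property is necessarily of the form $L_\omega=(\omega\ot\id)\Delta$ for a suitable functional, and idempotency of $E$ forces $\omega\conv\omega=\omega$. Extracting $\omega$ explicitly---for instance as the composition of $E$ with the universal counit, or by reading off the associated functional through the GNS construction for $\psi$---yields an idempotent state $\omega\in\Pu(\QG)$. The real content of this step is the verification that the given $E$ coincides with $L_\omega$ on a $\psi$-core, and this is where right-invariance is genuinely used, since it is exactly right-invariance that pins $E$ down as left convolution.

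With the identification $E=L_\omega|_{\Linf(\QG)}$ in hand, Theorem~\ref{thm:weightpreserv} applies directly: $L_\omega$ preserves \emph{both} Haar weights, so $E$ preserves $\varphi$ as well as $\psi$. By Takesaki's theorem the existence of a $\psi$-preserving normal conditional expectation already gives $\sigma^\psi_t(N)=N$ for all $t$, while the extra $\varphi$-invariance supplied by Theorem~\ref{thm:weightpreserv} gives $\sigma^\varphi_t(N)=N$; together these identify $E$ as the canonical conditional expectation attached to $N$, so $N$ is expected in the full sense of Theorem~\ref{maintheorem}. The C*-algebraic statement then follows by restriction: $\omega$ lies in $\Pu(\QG)$, the operator $L_\omega$ maps $\C_0(\QG)$ into itself, and its restriction is a conditional expectation onto the right-invariant expected C*-subalgebra $N\cap\C_0(\QG)$, weak*-density of $\C_0(\QG)$ in $\Linf(\QG)$ ensuring that the two pictures match.

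I expect the main obstacle to be the identification $E=L_\omega$ in the second step rather than anything in the final weight-chasing. In the non-coamenable case one cannot work with functionals on $\C_0(\QG)$ alone and must pass through the universal algebra $\C_0^u(\QG)$ even to make sense of $\omega$, while non-unimodularity means that $\varphi$ and $\psi$ have distinct modular automorphism groups, so that $\sigma^\psi_t(N)=N$ and $\sigma^\varphi_t(N)=N$ carry independent information and must both be secured. Reconciling the densely-defined convolution operator $L_\omega$ with the everywhere-defined expectation $E$ on a common $\psi$-core, and checking normality throughout, is the delicate point; once that is done, Theorem~\ref{thm:weightpreserv} does all the remaining work.
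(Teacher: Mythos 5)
Your von Neumann algebraic half follows the paper's route in outline: identify the given $\psi$-preserving normal expectation $E$ with a left multiplier $L_\omega$ for an idempotent state $\omega\in\Pu(\QG)$, then invoke Theorem~\ref{thm:weightpreserv} to get $\phi$-preservation for free. But the identification $E=L_\omega$ is the entire content of that half, and the two mechanisms you suggest for ``extracting $\omega$'' do not work: there is no bounded counit on $\Linf(\QG)$ (or on $\C_0(\QG)$, unless $\QG$ is coamenable), and $E$ does not act on $\C_0^u(\QG)$, so composing with the universal counit is unavailable; nor does the GNS construction for $\psi$ hand you a functional on the universal algebra. The tool the paper actually uses is the representation theorem for completely positive left multipliers (Theorem~\ref{Matt}): from $\psi$-preservation and right-invariance one gets $ER_\nu=R_\nu E$ for $\nu\in\Lonesharp(\QG)$ (as in Proposition~3.3 of \cite{SSIdem}), hence $E_*$ is a left module map on $\Lone(\QG)$, hence $E=L_\omega$ for some $\omega\in\Pu(\QG)$, idempotent by Lemma~\ref{idempexp}; this is exactly Proposition~\ref{backvNa}. (Your closing discussion of $\sigma_t^\psi$ and $\sigma_t^\phi$ is superfluous: once $E=L_\omega$ and Theorem~\ref{thm:weightpreserv} gives preservation of both weights, the subalgebra is expected by definition.)

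The C*-algebraic half has a genuine gap. The statement starts from an arbitrary right-invariant C*-subalgebra $\Clg\subset\C_0(\QG)$ together with a $\psi$-preserving conditional expectation $\C_0(\QG)\to\Clg$, which is a priori just a bounded idempotent map with no normality attached. Your plan only yields expectedness for subalgebras of the form $N\cap\C_0(\QG)$ with $N$ a right-invariant $\psi$-expected von Neumann subalgebra, and you give no argument that a given $\Clg$ with its given expectation arises in this way; the deduction ``by restriction'' runs in the wrong direction. The missing ingredient is precisely Lemma~\ref{lem::C0multipliers} (due to Daws): a nondegenerate completely positive map on $\C_0(\QG)$ intertwining the coproduct is automatically the restriction of a normal left multiplier of $\Linf(\QG)$. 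That lemma is what allows the paper to run the same identification $E=L_\omega|_{\C_0(\QG)}$ directly at the C*-level (Proposition~\ref{backcst}); without it, or some substitute for it, the C*-case does not reduce to the von Neumann case.
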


We also deduce from the main theorem some properties enjoyed
by idempotent states.

\begin{propn}\label{scaling}
Idempotent states in $\Pu(\G)$ are invariant with respect to the
antipode $S^u$, the unitary antipode $R^u$ and the scaling group
$\{\tau_t^u: t \in \br\}$.
\end{propn}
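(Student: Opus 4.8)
The plan is to reduce the statement about $S^u$ to separate statements for $R^u$ and $\tau^u_t$, prove those, and reassemble. Since the universal antipode has the polar decomposition $S^u = R^u\circ\tau^u_{-i/2}$, it suffices to establish $\omega\circ R^u=\omega$ and $\omega\circ\tau^u_t=\omega$ for every $t\in\br$: for $x$ analytic with respect to $\tau^u$ the function $z\mapsto\omega(\tau^u_z(x))$ is then entire and constant on $\br$, hence constant, so evaluating at $z=-i/2$ and using $R^u$-invariance gives $\omega(S^u(x))=(\omega\circ R^u)(\tau^u_{-i/2}(x))=\omega(x)$; as the $\tau^u$-analytic elements form a core for $S^u$ and $\omega$ is bounded, this extends over $\dom(S^u)$. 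The first thing to record is that $\omega\circ R^u$ and $\omega\circ\tau^u_t$ are again idempotent states: positivity and normalisation are immediate ($R^u$ is a unital positive anti-automorphism and $\tau^u_t$ a $*$-automorphism), while idempotence follows from the intertwiners $\Delta^u\circ\tau^u_t=(\tau^u_t\ot\tau^u_t)\circ\Delta^u$ and $\Delta^u\circ R^u=\varsigma\circ(R^u\ot R^u)\circ\Delta^u$ (with $\varsigma$ the tensor flip), which give $(\omega\tau^u_t)\star(\omega\tau^u_t)=(\omega\star\omega)\circ\tau^u_t=\omega\tau^u_t$ and, using the $\varsigma$-symmetry of $\omega\ot\omega$, $(\omega R^u)\star(\omega R^u)=(\omega\star\omega)\circ R^u=\omega R^u$.

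Next I would feed this into the bijection of Theorem~\ref{maintheorem}. Write $N_\omega=\{x\in\Linf(\QG):L_\omega(x)=x\}$ for the right-invariant subalgebra attached to $\omega$ (the range of the reduced left convolution operator, which for an idempotent state is the normal conditional expectation onto $N_\omega$) and $M_\omega=\{x:(\id\ot\omega)\Delta(x)=x\}$ for its left-invariant counterpart. A direct computation with the intertwiners above yields $L_{\omega\circ\tau^u_t}=\tau_{-t}\circ L_\omega\circ\tau_t$, hence $N_{\omega\circ\tau^u_t}=\tau_{-t}(N_\omega)$, and similarly $N_{\omega\circ R^u}=R(M_\omega)$. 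Since the correspondence of Theorem~\ref{maintheorem} is injective, the Proposition is therefore equivalent to the two purely structural assertions
\[
\tau_t(N_\omega)=N_\omega\ (t\in\br),\qquad R(M_\omega)=N_\omega .
\]

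The essential input here is Theorem~\ref{thm:weightpreserv}: the conditional expectation $E=L_\omega$ onto $N_\omega$ preserves the right Haar weight $\psi$, and it automatically preserves the left Haar weight $\phi$ as well (for any state $\mu$ one has $\phi\circ L_\mu=\phi$ directly from left invariance of $\phi$, so it is only the $\psi$-statement that carries content). By Takesaki's theorem $N_\omega$ is then globally invariant under both modular automorphism groups $\sigma^\phi$ and $\sigma^\psi$, and $E$ commutes with each of them. I would then try to upgrade this modular invariance to invariance under the scaling group and the unitary antipode using the standard commutation relations $\Delta\,\sigma^\psi_t=(\sigma^\psi_t\ot\tau_{-t})\Delta$, $R\,\sigma^\psi_t\,R=\sigma^\phi_{-t}$ and $R\tau_t=\tau_t R$, together with the right-coideal property $\Delta(N_\omega)\sub N_\omega\wot\Linf(\QG)$.

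This last passage is where I expect the real difficulty to lie, and where the genuinely new content of Theorem~\ref{thm:weightpreserv} must be spent: the scaling group is \emph{not} contained in the group generated by $\sigma^\phi$ and $\sigma^\psi$ (already for compact $\QG$, where $\phi=\psi$ but $\tau\neq\id$), so invariance of $N_\omega$ under the modular groups does not by itself deliver $\tau$-invariance. To close the gap I would either exploit the coideal relation more carefully, transporting the identity $\psi\circ E=\psi$ through $\Delta$ so as to pin down the behaviour of $E$ under $\tau_t$, or else pass to the dual picture, where $\omega$ is encoded by a group-like projection in $\Linf(\hQG)$ whose invariance under the dual scaling group and dual unitary antipode is a known structural feature; invariance of $\omega$ under $\tau^u_t$, then $R^u$, and finally $S^u$ would follow by transporting back.
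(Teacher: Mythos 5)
Your reduction is set up correctly (the verification that $\omega\circ R^u$ and $\omega\circ\tau^u_t$ are again idempotent states is fine, as is the identity $L_{\omega\circ\tau^u_t}=\tau_{-t}\circ L_\omega\circ\tau_t$), but the proof has a genuine gap exactly where you say you expect one: the two assertions $\tau_t(N_\omega)=N_\omega$ and $R(M_\omega)=N_\omega$ are never established, and neither of your two suggested escape routes is carried out. The second route is moreover close to circular -- the invariance of the associated group-like projection under the dual scaling group and dual unitary antipode is essentially a reformulation of the statement you are trying to prove, not an independent ``known structural feature'' you can import for free. The source of the difficulty is your choice of intertwining relation: by conjugating $L_\omega$ with the scaling group via $\Com_u\circ\tau^u_t=(\tau^u_t\ot\tau^u_t)\circ\Com_u$ you land on a condition ($\tau$-invariance of $N_\omega$) that Takesaki's theorem cannot reach, since, as you correctly observe, $\tau$ is not generated by $\sigma^\phi$ and $\sigma^\psi$.

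The fix is to use the \emph{mixed} relation $\Com_u\circ\sigma^u_t=(\tau^u_t\ot\sigma^u_t)\circ\Com_u$ (Proposition 9.2(2) of \cite{ku}) instead. Slicing with $\mu$ in the first leg gives
\[
L_{\mu\circ\tau^u_t}=\sigma_{-t}\circ L_\mu\circ\sigma_t ,
\]
so the scaling group acting on the state is traded for the \emph{modular} group conjugating the multiplier. Since $L_\omega$ is a normal conditional expectation preserving $\phi$, Takesaki's theorem gives $\sigma_{-t}\circ L_\omega\circ\sigma_t=L_\omega$, whence $L_{\omega\circ\tau^u_t}=L_\omega$ and $\omega\circ\tau^u_t=\omega$ by injectivity of $\mu\mapsto L_\mu$ -- no statement about $\tau_t(N_\omega)$ is ever needed. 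For the antipode the paper also runs the polar decomposition in the opposite direction from your plan: it proves $\omega\circ S^u=\omega$ directly (the argument of Proposition 2.6 of \cite{SSIdem}, adapted to the semi-universal unitary $\Ww$ and using the core description of $S^u$ from Corollary 9.2 of \cite{ku}), and then obtains $R^u$-invariance from $S^u$- and $\tau^u$-invariance via $R^u=S^u\circ\tau^u_{i/2}$. If you want to keep your order ($R^u$ first, then $S^u$), you must actually prove $R(M_\omega)=N_\omega$, which is not easier than the statement itself; I would recommend switching to the paper's order.
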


Finally we obtain the following characterisation of Haar idempotent states, i.e.\ those that arise as Haar states on compact quantum subgroups of $\QG$.

\begin{tw} \label{Haar}
Let $\omega \in \Pu(\G)$ be an idempotent state. The following conditions are equivalent:
\begin{rlist}
\item $\omega$ is a Haar idempotent;
\item the null space $N_\omega= \{a \in \C_0^u(\QG): \omega(a^*a) = 0\}$ is a two-sided ideal;
\item the right-invariant expected C*-subalgebra of $\C_0(\QG)$
  associated to $\omega$ is symmetric.
\end{rlist}
\end{tw}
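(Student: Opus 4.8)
The plan is to prove the cyclic equivalences $(i)\Rightarrow(ii)\Rightarrow(iii)\Rightarrow(i)$, since this characterises Haar idempotents through an algebraic property (being a two-sided ideal) and a geometric property (symmetry of the invariant subalgebra). I will freely use Theorem~\ref{maintheorem}, which lets me pass between the idempotent state $\omega$, the GNS data coming from the universal C*-algebra $\C_0^u(\QG)$, and the associated right-invariant expected C*-subalgebra of $\C_0(\QG)$.

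\medskip

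\emph{The implication $(i)\Rightarrow(ii)$.} Suppose $\omega$ is the Haar state $h_\H$ of a compact quantum subgroup $\H$ of $\QG$, pulled back along the canonical surjection $\pi\col \C_0^u(\QG)\to \C_0^u(\H)=\C(\H)$, so $\omega=h_\H\circ\pi$. First I would note that $N_\omega$ is the pullback under $\pi$ of the null space of $h_\H$ on $\C(\H)$; since $h_\H$ is a \emph{faithful} state on a compact quantum group (its null space is trivial, as the Haar state of a compact quantum group is faithful on $\C(\H)$ in its reduced form, and we may reduce by the null ideal), the null space of $h_\H$ is the zero ideal and hence $N_\omega = \ker\pi$. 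But $\ker\pi$ is a two-sided ideal because $\pi$ is a $*$-homomorphism. Thus $(ii)$ holds. The only delicate point here is handling faithfulness at the universal level; I would work with the reduced Haar state or pass through the GNS construction to ensure $\{a: h_\H(a^*a)=0\}$ is exactly the kernel of the canonical map to the GNS C*-algebra, which is an ideal.

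\medskip

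\emph{The implication $(ii)\Rightarrow(iii)$.} Assuming $N_\omega$ is a two-sided ideal, I would reconstruct a compact quantum group structure on the GNS C*-algebra. Writing $B=\C_0^u(\QG)/N_\omega$ (which is a C*-algebra precisely because $N_\omega$ is a closed two-sided ideal), I want to show the comultiplication descends to $B$ and endows it with a compact quantum group structure for which the image of $\omega$ is the Haar state; symmetry of the associated invariant subalgebra then follows because the subalgebra is the image of a genuine coideal arising from a quantum subgroup, on which the unitary antipode $R$ acts. More concretely, symmetry means the right-invariant subalgebra $\C_0(\H\backslash\QG)$ coincides with a two-sided invariant object, equivalently that it is preserved by $R$; I would verify that the ideal condition forces the idempotent state to satisfy $\omega\circ S^u=\omega$ (using Proposition~\ref{scaling}, already available, which gives antipode-invariance for all idempotent states), and combine this with the ideal structure to deduce that the expected subalgebra is invariant under the unitary antipode, i.e.\ symmetric.

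\medskip

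\emph{The implication $(iii)\Rightarrow(i)$.} Given that the right-invariant expected C*-subalgebra $\alg\sub\C_0(\QG)$ is symmetric, I would show $\alg$ carries the structure of $\C_0(\H\backslash\QG)$ for a genuine compact quantum subgroup $\H$. Symmetry means $\alg$ is invariant under $R$ and hence is simultaneously a right- and left-invariant coideal; such two-sided invariant expected subalgebras correspond, by the quantum-group analogue of the classical double-coset picture, to closed quantum subgroups. I would then identify $\omega$ as the Haar state of this subgroup by checking that the conditional expectation onto $\alg$ is implemented by convolution with $\omega$ and that $\omega$ restricts to the (unique) Haar state on the quotient. \emph{The main obstacle} I anticipate is precisely this last step: converting the symmetry/two-sided invariance of the coideal into an honest closed quantum subgroup and verifying the compactness of the quotient. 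This requires the theory identifying invariant coideals with quantum subgroups in the general locally compact setting (where coamenability and unimodularity are not assumed), and care is needed because the correspondence between coideals and subgroups is subtle outside the Kac or compact case. I would handle it by using the expectation property (guaranteed throughout by Theorem~\ref{maintheorem} and Corollary~\ref{corollary}) to produce a normal conditional expectation, which forces enough regularity to reconstruct the subgroup and its Haar state.
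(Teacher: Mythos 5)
Your overall cycle $(i)\Rightarrow(ii)\Rightarrow(iii)\Rightarrow(i)$ is a reasonable plan, and two pieces of it are sound: the argument for $(i)\Rightarrow(ii)$ (the null space of the Haar state of a compact quantum group is a two-sided ideal, so $N_\omega$ is its preimage under a $*$-homomorphism composed with the reduction; this is exactly the paper's route, via Woronowicz), and the reconstruction of a compact quantum group from $\blg=\C_0^u(\QG)/N_\omega$ when $N_\omega$ is an ideal (this is the paper's $(ii)\Rightarrow(i)$, which requires checking that $\blg$ acquires a unit, a coproduct and a faithful bi-invariant state, and that the quotient map lifts to a surjective morphism onto $\C^u(\QH)$ --- your sketch names the right steps).

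The genuine gap is in every step that touches condition $(iii)$: you consistently read ``symmetric'' as ``invariant under the unitary antipode $R$,'' but that is not the definition used in the statement. Symmetry of a right-invariant C*-subalgebra $\Clg\subset\C_0(\QG)$ means $\ww^*(\Clg\ot 1)\ww\subset\M(\Clg\ot\C_0(\hQG))$, a condition on the adjoint action of the multiplicative unitary, not $R$-invariance. This misreading makes your $(ii)\Rightarrow(iii)$ argument circular: you propose to derive symmetry from $\omega\circ S^u=\omega$ via Proposition~\ref{scaling}, but that proposition holds for \emph{every} idempotent state, so if your implication were valid then every idempotent state would satisfy $(iii)$ and hence be a Haar idempotent, which is false (non-Haar idempotents exist, and distinguishing them is the entire content of this theorem). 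The same problem undermines $(iii)\Rightarrow(i)$: ``$R$-invariance gives a two-sided invariant coideal, hence a subgroup'' is not the right mechanism, and in any case the passage from a symmetric coideal to a closed compact quantum subgroup in the general locally compact setting is a substantive result --- the paper imports it from Kasprzak--Khosravi (Proposition~3.13 and Theorem~5.14 of \cite{KK}, applied first at the universal level and then pushed down by the reducing morphism). Your proposal does not supply a replacement for that input, and the heuristic you offer in its place would not survive the correct definition of symmetry.
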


Here a remark is in place: after the first version of this article was
circulated, P.\,Kasprzak and F.\,Khosravi shared with us a draft of
their paper \cite{KK}. The correspondence (i)$\iff$(iii)
is essentially contained in that paper. Note also that Section 4 of
\cite{KK} improves  Corollary~\ref{corollary}.

The fact that the coamenability assumption can be dropped in the
compact case was also observed in \cite{HunUweAdam} -- there it is
much simpler, as one can use essentially purely algebraic methods.
The locally compact non-coamenable context neccessitates  applying
the theory of normal (left) multipliers on $\Linf(\QG)$, as developed
in \cite{jnr} and \cite{daws}. Finally we note that as some of the
proofs in the non-coamenable setting use the techniques and arguments
developed in \cite{SSIdem}, in the text below we often simply refer to
the appropriate parts of that paper.

\vspace*{0.5 cm}
\noindent
    {\bf Acknowledgement.}\
AS  was partially supported by the NCN (National Centre of Science) grant
2014/14/E/ST1/00525. We thank P.\,Kasprzak and F.\,Khosravi for
sharing with us a draft of \cite{KK} and useful comments.
We thank M. Daws for providing an argument for
Lemma~\ref{lem::C0multipliers}.

\section{Notation, terminology and background} \label{background}

Throughout the paper $\QG$ will be a locally compact quantum group in
the sense of Kustermans and Vaes, described via  the `algebra of
essentially bounded functions' $\Linf(\QG)$ and the the `algebra of
continuous functions vanishing at infinity'
$\C_0(\QG)\subset\Linf(\QG)$, where $\C_0(\QG)$ is a (usually
non-unital) C*-algebra and $\Linf(\QG)$ a von Neumann algebra equipped
with a \emph{coproduct}
$\Com:\Linf(\QG) \to \Linf(\QG) \wot \Linf(\QG)$. We will in general
use the notation and
terminology of \cite{dfsw}  and refer to the precise definitions of
all the objects appearing below to \cite{KV}, \cite{dfsw} and
\cite{SSIdem}.
The \emph{left} and the \emph{right Haar weights} of $\QG$
will be denoted by symbols $\phi$ and $\psi$, respectively.
The von Neumann algebra $\Linf(\G)$, so also $\C_0(\G)$ and
the multiplier algebra $\C_b(\G):= M(\C_0(\G))$,
act on the GNS Hilbert space $\Ltwo(\QG)$ of $\phi$.
We will use the standard notation
$\mathcal{N}_\phi=\{x \in \Linf(\QG):\phi(x^*x) < \infty\}$.  The
universal version of $\C_0(\QG)$ will be written as
$\C_0^u(\QG)$, the corresponding coproduct as $\Com_u$, and the
\emph{reducing morphism} from $\C_0^u(\QG)$ onto $\C_0(\QG)$ as
$\Lambda$. The dual space of  $\C_0^u(\G)$ will be denoted by
$\M\sp u(\QG)$, and the corresponding state space by $\Pu(\G)$ -- recall
that $\M\sp u(\G)$ is equipped with a natural Banach algebra structure
given by the convolution product
$\mu \star \nu:= (\mu \ot \nu) \circ \Com_u$ for
$\mu, \nu \in \M\sp u(\QG)$.  An element $\omega \in \Pu(\G)$ is called an
\emph{idempotent state} if $\omega \star \omega= \omega$. Also
the predual of $\Linf(\QG)$, denoted by $\Lone(\QG)$, admits a
similar convolution product; we will need at some point the fact
that $\Lone(\QG)$ admits a dense subalgebra $\Lonesharp(\QG)$, which
allows a natural involution, related to the \emph{antipode} $S$ of
$\Linf(\QG)$. The \emph{unitary antipode} of $\Linf(\QG)$ will be
denoted by $R$, the \emph{modular automorphisms of the left Haar
  weight} by $\sigma_t$ ($t \in \br$), the  \emph{scaling
  automorphisms} by $\tau_t$ ($t \in \br$) and the \emph{modular
  element of $\QG$} by $\delta$; all these have the counterparts on
the universal level, denoted respectively by $S^u$, $R^u$,
$\sigma_t^u$, $\tau_t^u$ and $\delta_u$ (see \cite{ku} for details).

Each locally compact quantum group admits a \emph{dual locally
  compact quantum group} $\hQG$. The \emph{multiplicative unitary}
belonging to the multiplier algebra  $\M(\C_0(\G) \ot \C_0(\hQG))$
implementing the coproduct will be denoted by $\ww$;
we will also use its semi-universal version
$\Ww \in \M(\C_0^u(\G) \ot \C_0(\hQG))$ and
the universal version $\WW \in \M(\C_0^u(\G) \ot\C_0^u(\hQG))$.

We say that a locally compact quantum group $\QH$ is \emph{compact} if
the algebra $\C_0(\QH)$ is unital. We then denote the respective
reduced/universal C*-algebras of `functions' on $\QH$ by $\C(\QH)$ and
$\C^u(\QH)$; each compact quantum group $\QH$ admits a \emph{Haar
  state} $h_\QH \in \Pu(\QH)$, a unique state such that $h_\QH \star
\mu = \mu \star h_\QH = h_\QH$ for all $\mu \in \Pu(\QH)$. A compact
quantum group $\QG$ is said to be a \emph{\textup{(}closed\textup{)}
  quantum subgroup}
of a locally compact quantum group $\QG$ if there exists a surjective
morphism $\pi:\C_0^u(\QG) \to \C^u(\QH)$ intertwining the respective
coproducts. In such a case it is easy to see that $\omega:=h_\QH \circ
\pi$ is an idempotent state; the idempotent states which arise in this
way are called \emph{Haar idempotents}.

A C*-subalgebra $\Clg \subset\C_0(\QG)$ is said to be
\emph{right invariant} if $(\id \ot \mu)(\Com(\Clg)) \subset\Clg$ for
all $\mu \in \C_0(\G)^*$. It is said to be $\phi$-\emph{expected}
(respectively, \emph{$\psi$-expected}) if there exists a conditional
expectation $E$ from $\C_0(\QG)$ onto $\Clg$ that is
$\phi$-preserving (respectively, $\psi$-preserving) and simply
\emph{expected} if there exists a conditional expectation $E$
onto $\Clg$ that is both $\psi$-preserving and
$\phi$-preserving; note that expected subalgebras are neccessarily non-trivial. Similarly a von Neumann subalgebra $\Dlg \subset
\Linf(\QG)$ is said to be \emph{right invariant} if
$(\id \ot\mu)(\Com(\Dlg)) \subset\Dlg$ for all $\mu \in \Lone(\G)$
(equivalently, $\Com(\Dlg) \subset \Dlg \wot \Linf(\QG) $) and we call it
\emph{$\phi$-expected} (or \emph{$\psi$-expected}, or
\emph{expected}) if the respective expectations from $\Linf(\QG)$ onto
$\Dlg$ are in addition normal. For a discussion of weight
preservation etc., we refer to \cite{SSIdem}. Finally we say that a
right-invariant C*-subalgebra $\Clg \subset\C_0(\QG)$ is
\emph{symmetric} if the following holds:
\[ \ww^*(\Clg\ot 1) \ww  \subset \M(\Clg \ot C_0(\hQG)).\]
Note that Proposition 3.1 of \cite{KK} (or rather its
right version) shows that non-zero right-invariant subalgebras of
$\C_0(\G)$ are automatically nondegenerate.

We now summarise the main facts concerning the unital, completely
positive, normal left multipliers on $\Linf(\QG)$, proved in papers
\cite{jnr} and \cite{daws} and gathered in \cite{dfsw}.

\begin{tw} \label{Matt}
Let $L: \Linf(\QG) \to \Linf(\QG)$ be a normal unital completely positive map. Then the following are equivalent:
\begin{rlist}
\item  $L=T^*$, where $T: \Lone(\QG) \to \Lone(\QG)$ is a bounded
  left module map;
\item \begin{equation}\Com \circ L = (L \ot \id) \circ \Com\label{commutCom};\end{equation}
\item there exists $a \in \C_b(\hQG)$ such that
      $(L \ot \id) (\ww) = (1 \ot a) \ww$;
\item there exists $\mu \in \Pu(\QG)$ such that
\begin{equation} \label{Lmu}
L(x) = (\mu \ot \id)(\Ww^* (1 \ot x) \Ww), \;\;\; x\in\Linf(\QG).
\end{equation}
\end{rlist}
In the above case we in fact have $a = (\mu \ot \id)(\Ww)$. Note
further that the formula \eqref{Lmu} defines a normal unital
completely positive map on $\Linf(\QG)$ for any $\mu \in \Pu(\QG)$, to
be denoted $L_\mu$ in what follows; the correspondence $\mu \mapsto
L_{\mu}$ is injective.
\end{tw}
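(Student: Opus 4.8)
The plan is to prove the four conditions equivalent via $(i)\Leftrightarrow(ii)$ and the cycle $(ii)\Rightarrow(iii)\Rightarrow(iv)\Rightarrow(ii)$, reading off the supplementary assertions at the end. For $(i)\Leftrightarrow(ii)$ I would exploit the duality between normal maps on $\Linf(\QG)$ and bounded maps on the predual: a completely positive $L$ is normal exactly when $L=T^*$ for a bounded $T\colon\Lone(\QG)\to\Lone(\QG)$, so the whole content is to match the algebraic conditions. Using $\langle\omega\star\rho,y\rangle=\langle\omega\ot\rho,\Com(y)\rangle$ and expanding $\langle T(\omega\star\rho),x\rangle=\langle\omega\star\rho,L(x)\rangle$ against $\langle\omega\ot\rho,(L\ot\id)\Com(x)\rangle$, one checks that the covariance relation \eqref{commutCom} dualises precisely to the module-map property of $T$ required in (i). This is a routine, purely formal computation once the conventions are fixed.

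The one genuinely computational step I would carry out by hand is $(ii)\Rightarrow(iii)$. Apply $L\ot\id$ to the corepresentation identity $(\Com\ot\id)(\ww)=\ww_{13}\ww_{23}$. On the right, since $\ww_{23}$ is trivial in the first leg and $L\ot\id\ot\id$ is a module map over the legs $2,3$, I obtain $b_{13}\,\ww_{23}$ with $b:=(L\ot\id)(\ww)$; on the left, covariance \eqref{commutCom} in the form $(L\ot\id)\Com=\Com L$ together with the fact that $\Com\ot\id$ is a homomorphism gives $(\Com\ot\id)(b)$. Thus $(\Com\ot\id)(b)=b_{13}\,\ww_{23}$, and setting $c:=b\,\ww^*$ and using $(\Com\ot\id)(\ww^*)=\ww_{23}^*\ww_{13}^*$ yields the invariance relation $(\Com\ot\id)(c)=c_{13}$. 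Slicing the $\hQG$-leg by an arbitrary $\hat\rho\in\Lone(\hQG)$ turns this into $\Com(y)=y\ot 1$ for $y:=(\id\ot\hat\rho)(c)\in\Linf(\G)$, and as the only such left-coinvariant elements are scalars, every slice $y$ is a multiple of $1$. Hence $c\in 1\wot\Linf(\hQG)$, i.e.\ $c=1\ot a$, so that $(L\ot\id)(\ww)=(1\ot a)\ww$; that $a$ in fact lies in $\C_b(\hQG)=\M(\C_0(\hQG))$ follows from $\ww\in\M(\C_0(\G)\ot\C_0(\hQG))$ and boundedness of $L$.

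I expect the main obstacle to be $(iii)\Rightarrow(iv)$, where one must pass from the reduced to the universal level and convert complete positivity of $L$ into genuine positivity of a state $\mu\in\Pu(\QG)$ with $a=(\mu\ot\id)(\Ww)$. Here I would invoke the theory of the semi-universal multiplicative unitary $\Ww$ from \cite{jnr} and \cite{daws}: the assignment $\mu\mapsto(\mu\ot\id)(\Ww)$ identifies states on $\C_0^u(\G)$ with exactly the elements $a\in\C_b(\hQG)$ produced by covariant completely positive maps, positivity of $\mu$ being extracted from complete positivity of $L$ through the corepresentation picture. Once $\mu$ is obtained, the converse $(iv)\Rightarrow(ii)$, as well as normality, unitality and complete positivity of $L_\mu$ for \emph{arbitrary} $\mu$, are direct consequences of the defining formula \eqref{Lmu}: covariance follows from the pentagon relation for $\Ww$ together with the implementation of $\Com$ by $\ww$, while unitality and complete positivity are immediate from $\Ww^*\Ww=1$ and positivity of $\mu$. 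The identity $a=(\mu\ot\id)(\Ww)$ is read off along the way, and injectivity of $\mu\mapsto L_\mu$ follows because $\hat\rho\big((\mu\ot\id)(\Ww)\big)=\mu\big((\id\ot\hat\rho)(\Ww)\big)$ for all $\hat\rho\in\Lone(\hQG)$ while the slices $(\id\ot\hat\rho)(\Ww)$ span a dense subspace of $\C_0^u(\G)$, so that $a$ determines $\mu$ uniquely.
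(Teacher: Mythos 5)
First, a point of calibration: the paper does not actually prove Theorem~\ref{Matt}. It is explicitly presented as a summary of facts ``proved in papers \cite{jnr} and \cite{daws} and gathered in \cite{dfsw}'', so there is no in-paper argument to compare yours against; the relevant question is whether your sketch would stand on its own. The routine parts of it would: the dualisation argument for (i)$\iff$(ii) is correct (normality of $L$ gives the preadjoint $T$, and pairing against $\omega\ot\rho$ converts \eqref{commutCom} into the module property, using that products, resp.\ elementary tensors, are total); and your computation for (ii)$\Rightarrow$(iii) is the standard one, provided you justify $(L\ot\id\ot\id)(\ww_{13}\ww_{23})=b_{13}\ww_{23}$ by the multiplicative-domain property of the unital completely positive map $L\ot\id\ot\id$ (elements with trivial first leg are fixed, hence lie in the multiplicative domain) and then use that $\Com(y)=y\ot 1$ forces $y\in\bc 1$. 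One minor gap there: this argument only places $a$ in $\Linf(\hQG)$, and your claim that $a\in\C_b(\hQG)$ ``follows from $\ww\in\M(\C_0(\G)\ot\C_0(\hQG))$ and boundedness of $L$'' is an assertion, not an argument; in \cite{daws} the membership $a\in\M(\C_0(\hQG))$ is obtained a posteriori from the identity $a=(\mu\ot\id)(\Ww)$, so the logical cycle should be arranged accordingly.

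The substantive problem is (iii)$\Rightarrow$(iv). Producing a genuine \emph{state} $\mu\in\Pu(\QG)$ on the universal C*-algebra that represents the completely positive multiplier is not something one can ``invoke from the theory of the semi-universal multiplicative unitary'': it \emph{is} the main theorem of \cite{daws}, resting on the representation theorem of \cite{jnr} which realises such an $L$ as $x\mapsto V^*(1\ot x)V$ on $B(\Ltwo(\QG))$, followed by a nontrivial positivity argument to land the representing functional on $\C_0^u(\QG)$ as a state. Your paragraph for this step describes the statement to be proved rather than a proof of it, so at exactly the point where the real content of the theorem lies your proposal collapses to the same citation the paper itself makes. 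The remaining assertions ((iv)$\Rightarrow$(ii), unitality and complete positivity of $L_\mu$, the formula for $a$, and injectivity of $\mu\mapsto L_\mu$ via density of the slices $(\id\ot\hat\rho)(\Ww)$ in $\C_0^u(\QG)$) are handled correctly, though normality of $L_\mu$ is most cleanly seen by writing $\mu$ as a vector state in its GNS representation rather than being ``immediate'' from \eqref{Lmu}. In short: acceptable as an outline that matches how the literature proves the result, but not a self-contained proof, because the key implication is cited rather than established.
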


We need some more properties of the maps described in the above theorem, which we will call \emph{left multipliers of $\Linf(\QG)$}.

\begin{propn} \label{Matt2}
Let $\mu \in \Pu(\QG)$. The map $L_\mu$ defined above preserves the
left Haar weight $\phi$. Moreover, it restricts to a completely positive
nondegenerate \textup{(}in the sense of \cite{SSIdem}\textup{)}
map on $\C_0(\QG)$. If we consider the \textup{(}completely positive,
nondegenerate\textup{)}
map $L^u_\mu: \C_0^u(\QG) \to \C_0^u(\QG)$ given by $L^u_\mu = (\mu \ot \id)\circ
\Com_u$,
then we have
\begin{equation}
\Lambda \circ L_\mu^u = L_\mu \circ \Lambda. \label{reducingrelation}
\end{equation}
Finally if $\nu \in \Pu(\QG)$ is another state, then
\begin{equation}
L_\mu \circ L_\nu =L_{\nu \star \mu}.\label{convolutioncomposition}
\end{equation}
\end{propn}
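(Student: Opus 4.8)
The plan is to establish the four assertions in turn, in each case reducing to the characterisations collected in Theorem~\ref{Matt}. First I would treat weight preservation. Fix $x \in \Linf(\QG)^+$. By Theorem~\ref{Matt}(ii) we have $\Com(L_\mu(x)) = (L_\mu \ot \id)(\Com(x))$. Applying the operator-valued slice $(\id \ot \phi)$ (taking values in the extended positive part) and using left invariance of $\phi$ in the form $(\id \ot \phi)(\Com(y)) = \phi(y)\,1$, the left-hand side becomes $\phi(L_\mu(x))\,1$. On the right-hand side $L_\mu$ acts on the first leg and $\phi$ on the second, so after interchanging them and invoking unitality of $L_\mu$ one obtains $L_\mu\big((\id \ot \phi)\Com(x)\big) = L_\mu(\phi(x)\,1) = \phi(x)\,1$. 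Hence $\phi \circ L_\mu = \phi$. Preservation of $\psi$ then follows either by the symmetric computation with right invariance or, more economically, via the unitary antipode together with the relation between $\phi$ and $\psi$.

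Next I would show that $L_\mu$ restricts to $\C_0(\QG)$. Recall that the slices $(\id \ot \omega)(\ww)$ with $\omega \in \C_0(\hQG)^*$ are norm-dense in $\C_0(\QG)$. For such an element Theorem~\ref{Matt}(iii) gives $(L_\mu \ot \id)(\ww) = (1 \ot a)\ww$ with $a \in \C_b(\hQG)$, whence
\[ L_\mu\big((\id \ot \omega)(\ww)\big) = (\id \ot \omega)\big((1 \ot a)\ww\big) = (\id \ot \omega_a)(\ww), \qquad \omega_a := \omega(a\,\cdot\,) \in \C_0(\hQG)^*, \]
which again lies in $\C_0(\QG)$. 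By boundedness $L_\mu(\C_0(\QG)) \sub \C_0(\QG)$; complete positivity is inherited from $\Linf(\QG)$, and nondegeneracy in the sense of \cite{SSIdem} is supplied by unitality of $L_\mu$, which yields a unital strictly continuous extension to $\C_b(\QG)$.

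For the reducing relation \eqref{reducingrelation} the linchpin is the identity of the universal setting (see \cite{ku})
\[ (\id \ot \Lambda)\big(\Com_u(a)\big) = \Ww^*(1 \ot \Lambda(a))\Ww, \qquad a \in \C_0^u(\QG), \]
which is consistent with $(\Lambda \ot \Lambda)\Com_u = \Com \circ \Lambda$ on applying $\Lambda \ot \id$ and using $(\Lambda \ot \id)(\Ww) = \ww$. Granting it, slicing the first leg with $\mu$ gives $\Lambda(L_\mu^u(a)) = (\mu \ot \id)\big((\id \ot \Lambda)\Com_u(a)\big) = (\mu \ot \id)\big(\Ww^*(1 \ot \Lambda(a))\Ww\big) = L_\mu(\Lambda(a))$, which is precisely \eqref{reducingrelation}. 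Finally, for \eqref{convolutioncomposition} I would first verify the identity on the universal level by a pure coassociativity computation: for $a \in \C_0^u(\QG)$,
\[ L^u_\mu(L^u_\nu(a)) = (\mu \ot \id)(\nu \ot \id \ot \id)(\Com_u \ot \id)\Com_u(a) = \big((\nu \ot \mu)\Com_u \ot \id\big)\Com_u(a) = L^u_{\nu \star \mu}(a), \]
using $(\id \ot \Com_u)\Com_u = (\Com_u \ot \id)\Com_u$ and $\nu \star \mu = (\nu \ot \mu)\Com_u$. Then \eqref{reducingrelation} transports this to the reduced level: $L_\mu \circ L_\nu \circ \Lambda = L_\mu \circ \Lambda \circ L^u_\nu = \Lambda \circ L^u_\mu \circ L^u_\nu = \Lambda \circ L^u_{\nu\star\mu} = L_{\nu\star\mu} \circ \Lambda$, and since $\Lambda$ maps onto $\C_0(\QG)$ we get $L_\mu \circ L_\nu = L_{\nu\star\mu}$ on $\C_0(\QG)$; both sides being normal and $\C_0(\QG)$ weak$^*$-dense in $\Linf(\QG)$, the equality extends to all of $\Linf(\QG)$.

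The main obstacle I anticipate is the first step: the interchange of the normal map $L_\mu$ with the unbounded slice $(\id \ot \phi)$ must be justified on the extended positive part, which I would handle by approximating $\phi$ from below by an increasing net of normal functionals and passing to the limit by normality and lower semicontinuity, as in \cite{SSIdem}. A secondary point requiring care is the precise bookkeeping of legs in the semi-universal identity underlying Paragraph three; once that relation is in hand, the remaining two assertions are formal consequences of coassociativity and the surjectivity of $\Lambda$.
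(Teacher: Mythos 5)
Your arguments for the four assertions actually claimed in the proposition are sound and, for the reducing relation \eqref{reducingrelation} and the composition law \eqref{convolutioncomposition}, follow the same route as the paper: the identity $(\id\ot\Lambda)\circ\Com_u(a)=\Ww^*(1\ot\Lambda(a))\Ww$ from \cite{ku}, sliced by $\mu$ on the first leg, and then coassociativity on the universal level transported by $\Lambda$, normality and weak$^*$-density. For the first two assertions the paper simply cites \cite{KNR}, \cite{dfsw} and \cite{dawscategory}, so your direct computations --- the extended-positive-part slice argument for $\phi\circ L_\mu=\phi$ and the density of the slices $(\id\ot\omega)(\ww)$ for the restriction to $\C_0(\QG)$ --- are an essentially correct self-contained substitute, modulo the interchange of $L_\mu$ with $(\id\ot\phi)$ that you yourself flag and handle appropriately.

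However, the sentence ``Preservation of $\psi$ then follows either by the symmetric computation with right invariance or \dots\ via the unitary antipode'' is false and must be deleted. The symmetric computation breaks down because $L_\mu$ acts on the \emph{first} leg of $\Com$, which is precisely the leg that right invariance slices: applying $(\psi\ot\id)$ to $\Com(L_\mu(x))=(L_\mu\ot\id)\Com(x)$ yields $\psi(L_\mu(x))1=((\psi\circ L_\mu)\ot\id)(\Com(x))$, which is circular, not a proof. The unitary antipode converts $L_\mu$ into a right multiplier, whose preservation of $\phi$ is the same (false) statement in disguise. Classically, for $\QG=G$ a non-unimodular group one computes $\psi(L_\mu(f))=\bigl(\int_G\Delta_G(t)^{-1}\,d\mu(t)\bigr)\,\psi(f)$, where $\Delta_G$ is the modular function, and this differs from $\psi(f)$ already for a point mass. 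Indeed, were your claim true, Theorem~\ref{thm:weightpreserv} would hold trivially for \emph{arbitrary} states, whereas the entire point of that theorem --- and of Lemma~\ref{deltabehaviour} and the preceding lemma establishing $\omega(\delta_u^{it})=1$ --- is that idempotency is exactly what forces $L_\omega$ to preserve $\psi$ as well. Since Proposition~\ref{Matt2} only asserts preservation of $\phi$, this slip does not invalidate your proof of the stated result, but the aside reflects a misconception that should be corrected.
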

\begin{proof}
The first fact stated above is Lemma 3.4 of \cite{KNR}. The second is
noted in \cite{dfsw} and also in \cite{dawscategory}. The third is a
consequence of the following formula, established in Proposition 6.2
of \cite{ku}:
\[ (\id \ot \Lambda) \circ \Com_u(x) = \Ww^*( \id\ot \Lambda(x))\Ww,\;\;\; x \in \C_0^u(\QG)\]
(note that the maps $\theta$ appearing in \cite{ku} disappear here as we tacitly assume that $\C_0(\hQG)$ is represented on $\mathrm{L}^2(\QG)$). Finally the last relation is easily checked on the level of the `universal' maps $L_\mu^u$ and then follows by the equality \eqref{reducingrelation}, normality of the multipliers and weak$^*$-density of $\C_0(\QG)$ in $\Linf(\G)$.
\end{proof}

Given an element $\nu \in \C_0(\G)^*$ we will write simply $L_\nu$ for
an operator formally defined as $L_{\nu \circ \Lambda}$; it is easy to
see that then $L_\nu= (\nu \ot \id)\circ\Com$
on $\C_0(\G)$ (also on $\Linf(\G)$ with a suitable
interpretation of the right-hand side).

Finally note that we can of course consider the corresponding
(unital completely positive) \emph{right multipliers}, to be denoted
$R_\mu$ for $\mu \in \Pu(\G)$. Then the left multipliers commute
with the right ones, as can be seen for example from condition (i)
in Theorem \ref{Matt} and the fact that products of elements in
$\Lone(\G)$ are dense in $ \Lone(\G)$.

\section{Proofs of the main results}

An important step towards the main theorem is
Theorem \ref{thm:weightpreserv}, which we will prove first.
To this end, we need a few lemmas.

Recall that the modular elements $\delta$ and $\delta_u$
are unbounded, strictly positive operators affiliated with
$\C_0(\G)$ and $\C_0\sp u(\G)$, respectively.

\begin{lem}
Let $\omega\in \Pu(\G)$ be an idempotent state. Then for every $t\in\br$ and $a\in \C_0^u(\QG)$
\[L^u_\omega(\delta_u^{it}a) = \delta_u^{it} L_\omega\sp u(a)
\qquad\text{and}\qquad L^u_\omega(a\delta_u^{it}) = L^u_\omega(a)\delta_u^{it}.
  \]
\end{lem}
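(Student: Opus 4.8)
The plan is to reduce both identities to a single invariance property of $\omega$ under the modular element. The essential input is that $\delta_u^{it}$ is a unitary multiplier of $\C_0^u(\QG)$ which is group-like for the universal coproduct, i.e. $\Com_u(\delta_u^{it})=\delta_u^{it}\ot\delta_u^{it}$ for all $t\in\br$ (see \cite{ku}). First I would record that $\omega(\delta_u^{it})=1$ for every $t$: applying the idempotency $\omega=\omega\star\omega=(\omega\ot\omega)\circ\Com_u$ to $\delta_u^{it}$ and using the group-like relation gives $\omega(\delta_u^{it})=\omega(\delta_u^{it})^2$, so this scalar is an idempotent and hence lies in $\{0,1\}$. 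Since $t\mapsto\delta_u^{it}$ is strictly continuous and the canonical extension of the state $\omega$ to $M(\C_0^u(\QG))$ is strictly continuous on bounded sets, the function $t\mapsto\omega(\delta_u^{it})$ is continuous; as it takes the value $\omega(1)=1$ at $t=0$, connectedness of $\br$ forces $\omega(\delta_u^{it})=1$ for all $t$.

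Next I would upgrade this scalar identity to the module-type identities $\omega(\delta_u^{it}b)=\omega(b)=\omega(b\delta_u^{it})$, valid for all $b\in M(\C_0^u(\QG))$. Passing to the GNS representation $(\pi,\xi)$ of $\omega$, the vector $\pi(\delta_u^{it})\xi$ has the same norm as $\xi$ because $\delta_u^{it}$ is unitary, while $\langle\xi,\pi(\delta_u^{it})\xi\rangle=\omega(\delta_u^{it})=1$; the equality case of the Cauchy--Schwarz inequality then forces $\pi(\delta_u^{it})\xi=\xi$, and consequently $\pi(\delta_u^{-it})\xi=\xi$ as well. Both displayed identities follow at once by inserting these fixed-vector relations into $\omega(\delta_u^{it}b)=\langle\pi(\delta_u^{-it})\xi,\pi(b)\xi\rangle$ and $\omega(b\delta_u^{it})=\langle\xi,\pi(b)\pi(\delta_u^{it})\xi\rangle$.

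Finally, the two formulas in the statement become a direct slice-map computation. For the first, the group-like relation gives $\Com_u(\delta_u^{it}a)=(\delta_u^{it}\ot\delta_u^{it})\Com_u(a)$; applying $(\omega\ot\id)$ and pulling the factor $\delta_u^{it}$ in the second leg out to the left reduces the claim to $(\omega(\delta_u^{it}\,\cdot\,)\ot\id)(\Com_u(a))=(\omega\ot\id)(\Com_u(a))$, which is precisely the left invariance established above; this yields $\delta_u^{it}L^u_\omega(a)$. The second formula is entirely symmetric, starting from $\Com_u(a\delta_u^{it})=\Com_u(a)(\delta_u^{it}\ot\delta_u^{it})$, pulling the second-leg factor out to the right, and invoking the right invariance $\omega(\,\cdot\,\delta_u^{it})=\omega$. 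I expect the only genuinely delicate points to be the continuity argument that pins the idempotent scalar at $1$ rather than $0$, and the justification that the unitary multiplier $\delta_u^{it}$ may be moved through the slice map $(\omega\ot\id)$; both are handled by strict continuity together with the canonical extension of $\omega$ to the multiplier algebra.
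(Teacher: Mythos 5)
Your proposal is correct and follows essentially the same route as the paper: idempotency plus the group-like relation $\Com_u(\delta_u^{it})=\delta_u^{it}\ot\delta_u^{it}$ gives $\omega(\delta_u^{it})\in\{0,1\}$, strict continuity pins the value at $1$, and the identities then follow by moving $\delta_u^{it}\ot\delta_u^{it}$ through the slice map $(\omega\ot\id)$. The only cosmetic difference is that you justify $\omega(\delta_u^{it}\,b)=\omega(b)=\omega(b\,\delta_u^{it})$ by a GNS/Cauchy--Schwarz argument, whereas the paper phrases the same fact as $\delta_u^{it}$ lying in the multiplicative domain of $\omega$ (verified via $\omega(\delta_u^{it}\delta_u^{-it})=\omega(\delta_u^{it})\omega(\delta_u^{-it})$) --- these are two presentations of the same step.
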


\begin{proof}
 We prove only the first identity, the second being similar.
As $\omega$ is an idempotent, we have
\[
\omega(\delta_u^{it}) = (\omega\conv\omega)(\delta_u^{it})
= (\omega\ot\omega)(\Com_u(\delta_u^{it}))
= \bigl(\omega(\delta_u^{it})\bigr)^2
\]
(note that we need to use the strict extension of $\omega$ to $M(C_0\sp u(\G)$).
Hence $\omega(\delta_u^{it})$ is either $0$ or~$1$.
However the map $t\mapsto \omega(\delta_u^{it})$
is continuous and $\omega(\delta_u^{i0}) =\omega(1) = 1$,
so $\omega(\delta_u^{it}) = 1$ for every $t\in\br$.

Next we note that
\[
\omega(\delta_u^{it}\delta_u^{-it}) = \omega(1) = 1
= \omega(\delta_u^{it})\omega(\delta_u^{-it}).
\]
It follows that  $\delta_u^{it}$ is in the multiplicative domain
of $\omega$ for every $t\in \br$.
Then $\delta_u^{it}\ot\delta_u^{it}$ is in the multiplicative domain of
$\omega \ot \id: M(\C_0^u (\G)\ot\C_0^u (\G))  \to M(\C_0\sp u(\G))$.
Therefore
\begin{align*}
L^u_\omega(\delta_u^{it}a) = (\omega \ot \id)(\Com_u(\delta_u^{it}a))
  &= (\omega\ot\id)((\delta_u^{it}\ot\delta_u^{it}) \Com_u(a))\\
&= \bigl((\omega \ot \id)(\delta_u^{it}\ot\delta_u^{it})\bigr)
   \bigl( (\omega \ot \id)(\Com_u(a))\bigr)
  = \delta_u^{it}L^u_\omega(a).
\end{align*}
\end{proof}

\begin{cor} \label{lemma:L-omega-commutes}
Let $\omega\in \Pu(\G)$ be an idempotent state. Then for every
$t\in\br$ and $a\in \C_0(\QG)$
\begin{equation} \label{eq:L-omega commutes}
L_\omega(\delta^{it} a) = \delta^{it} L_\omega(a)
\qquad\text{and}\qquad L_\omega(a\delta^{it}) =
L_\omega(a)\delta^{it}.
\end{equation}
\end{cor}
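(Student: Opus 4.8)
The plan is to deduce the reduced-level identities directly from the universal ones established in the preceding lemma, by pushing them through the reducing morphism $\Lambda$. The two ingredients are the intertwining relation \eqref{reducingrelation}, namely $\Lambda \circ L_\omega^u = L_\omega \circ \Lambda$, together with the compatibility of $\Lambda$ with the modular elements, that is $\Lambda(\delta_u^{it}) = \delta^{it}$ for all $t \in \br$ (recall from \cite{ku} that the strict extension of $\Lambda$ to the multiplier algebras carries $\delta_u^{it}$ to $\delta^{it}$).

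First I would fix $t \in \br$ and $a \in \C_0^u(\QG)$ and apply $\Lambda$ to the first identity of the lemma. Since $\Lambda$ is a $*$-homomorphism whose strict extension sends $\delta_u^{it}$ to $\delta^{it}$, the left-hand side becomes $\Lambda(L_\omega^u(\delta_u^{it}a)) = L_\omega(\Lambda(\delta_u^{it}a)) = L_\omega(\delta^{it}\Lambda(a))$, while the right-hand side becomes $\Lambda(\delta_u^{it}L_\omega^u(a)) = \delta^{it}\Lambda(L_\omega^u(a)) = \delta^{it}L_\omega(\Lambda(a))$. Equating the two gives
\[
L_\omega(\delta^{it}\Lambda(a)) = \delta^{it} L_\omega(\Lambda(a)).
\]
Because $\Lambda$ maps $\C_0^u(\QG)$ onto $\C_0(\QG)$, as $a$ ranges over $\C_0^u(\QG)$ the element $\Lambda(a)$ ranges over all of $\C_0(\QG)$, so the first identity of the corollary follows. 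The second identity is obtained in exactly the same way from the second identity of the lemma. Note that $\delta^{it}$ is a unitary multiplier of $\C_0(\QG)$, so $\delta^{it}\Lambda(a) \in \C_0(\QG)$ and $L_\omega$ is indeed applicable here, by the restriction property recorded in Proposition~\ref{Matt2}.

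The only point requiring any care is the behaviour of $\Lambda$ on the unbounded, affiliated modular element: one must pass to the strict extension of $\Lambda$ to $M(\C_0^u(\QG))$ and invoke $\Lambda(\delta_u^{it}) = \delta^{it}$, exactly in the spirit of the strict-extension argument already used in the proof of the preceding lemma. I do not expect a genuine obstacle beyond this bookkeeping, since everything reduces to the two displayed identities of the lemma combined with the naturality of $\Lambda$ expressed by \eqref{reducingrelation}.
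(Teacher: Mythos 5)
Your argument is correct and is exactly the paper's own proof, merely written out in more detail: the paper also deduces the corollary by applying the reducing morphism $\Lambda$ to the universal identities, using the intertwining relation \eqref{reducingrelation}, the fact that $\Lambda(\delta_u^{it})=\delta^{it}$, and surjectivity of $\Lambda$. No further comment is needed.
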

\begin{proof}
An immediate consequence of the previous lemma, the intertwining
relation \eqref{reducingrelation} and the fact that
$\Lambda(\delta_u^{it}) = \delta^{it}$ for any $t \in \br$ (see
\cite{ku}).
\end{proof}

\begin{lem} \label{deltabehaviour}
  Let $\omega\in \Pu(\QG)$ be an idempotent state.
  For every
  \[
a\in D_\phi := \{a\in \C_0(\QG)\mid a\delta^{1/2} \text{ is bounded
  and the closure }
          \overline{a\delta^{1/2}}\in \Nphi\}
\]
we have
  \begin{equation} \label{eq:L-omega commutes2}
    L_\omega((a\delta^{1/2})^*\overline{a\delta^{1/2}}) =
   \overline{ \delta^{1/2} L_\omega(a^*a) \delta^{1/2}}.
  \end{equation}
\end{lem}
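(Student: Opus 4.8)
The plan is to deduce the identity by a regularisation argument, replacing the unbounded $\delta^{1/2}$ by bounded Borel functions of $\delta$ and passing to the limit using normality of $L_\omega$. Throughout I write $c := a^*a$ and $b := \overline{a\delta^{1/2}}$, so that the asserted identity reads $L_\omega(b^*b) = \overline{\delta^{1/2}L_\omega(c)\delta^{1/2}}$.

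First I would upgrade Corollary~\ref{lemma:L-omega-commutes} from the unitaries $\delta^{it}$ to arbitrary bounded Borel functions of $\delta$. Extending the two identities of the Corollary from $\C_0(\QG)$ to all of $\Linf(\QG)$ by weak$^*$-density of $\C_0(\QG)$ and normality of $L_\omega$, consider
\[\mathcal M_L := \{m \in \Linf(\QG) : L_\omega(mx) = m\,L_\omega(x)\ \text{for all } x \in \Linf(\QG)\},\]
and the analogous set $\mathcal M_R$ for right multiplication. Each is an ultraweakly closed subalgebra (ultraweak closedness uses separate ultraweak continuity of multiplication together with normality of $L_\omega$), and each contains the $*$-closed family $\{\delta^{it} : t \in \br\}$; hence both contain the von Neumann algebra $\{\delta^{it}\}''$, which is exactly the range of the bounded Borel functional calculus of $\delta$. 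Consequently, for all bounded Borel functions $g,h$ and all $x \in \Linf(\QG)$,
\[L_\omega\bigl(g(\delta)\,x\,h(\delta)\bigr) = g(\delta)\,L_\omega(x)\,h(\delta).\]

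Next I would choose the regularisers $g_n(\lambda) := \lambda^{1/2}(1+\lambda/n)^{-1}$, so that $g_n(\delta) \nearrow \delta^{1/2}$ strongly on $\dom(\delta^{1/2})$ while each $g_n(\delta)$ is bounded. The key technical point is that, although $\|g_n(\delta)\| \to \infty$, one has $a\,g_n(\delta) = b\,k_n(\delta)$ with $k_n(\lambda) = (1+\lambda/n)^{-1}$, so that $\|a\,g_n(\delta)\| \le \|b\|$ uniformly and $a\,g_n(\delta) \to b$ strongly. Writing $T_n := a\,g_n(\delta) = b\,k_n(\delta)$, the operators $g_n(\delta)\,c\,g_n(\delta) = T_n^* T_n$ are uniformly bounded, positive, and converge to $b^*b$ in the weak (hence, being bounded, ultraweak) operator topology, since $\langle T_n^*T_n\xi,\xi\rangle = \|T_n\xi\|^2 \to \|b\xi\|^2$ and one polarises. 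Applying the displayed commutation with $g=h=g_n$ and $x=c$ gives, for every $n$,
\[L_\omega\bigl(g_n(\delta)\,c\,g_n(\delta)\bigr) = g_n(\delta)\,L_\omega(c)\,g_n(\delta).\]

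Finally I would pass to the limit $n \to \infty$ in this equality. On the left, normality of $L_\omega$ together with $g_n(\delta)\,c\,g_n(\delta) \to b^*b$ ultraweakly gives $L_\omega(g_n(\delta)\,c\,g_n(\delta)) \to L_\omega(b^*b)$. On the right, for $\xi,\eta \in \dom(\delta^{1/2})$ one computes $\langle g_n(\delta)L_\omega(c)g_n(\delta)\xi,\eta\rangle = \langle L_\omega(c)\,g_n(\delta)\xi,\ g_n(\delta)\eta\rangle \to \langle L_\omega(c)\,\delta^{1/2}\xi,\ \delta^{1/2}\eta\rangle$. Comparing the two limits yields, for all $\xi,\eta \in \dom(\delta^{1/2})$,
\[\langle L_\omega(b^*b)\,\xi,\ \eta\rangle = \langle L_\omega(c)\,\delta^{1/2}\xi,\ \delta^{1/2}\eta\rangle.\]
Since the right-hand side is the sesquilinear form of $\delta^{1/2}L_\omega(c)\delta^{1/2}$ on the core $\dom(\delta^{1/2})$, this simultaneously shows that this operator is bounded and identifies its closure with $L_\omega(b^*b)$, which is exactly the claim. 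The main obstacle is the bookkeeping around the unbounded $\delta^{1/2}$: one must secure a uniform bound on $a\,g_n(\delta)$ (supplied by the factorisation $a\,g_n(\delta) = b\,k_n(\delta)$), and, crucially, one obtains the boundedness of $\overline{\delta^{1/2}L_\omega(c)\delta^{1/2}}$ not by a direct estimate but as a by-product of the fact that the left-hand limit $L_\omega(b^*b)$ is manifestly bounded.
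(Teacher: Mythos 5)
Your argument is correct, but it takes a genuinely different route from the paper. The paper stays with the one-parameter family from Corollary~\ref{lemma:L-omega-commutes} and performs an analytic continuation in the strip $0\le\operatorname{Re}z\le 1/2$: Proposition 9.24 of \cite{stratila-zsido} gives a wo-continuous analytic extension of $it\mapsto\delta^{it}a^*a\delta^{it}$, normality of $L_\omega$ transfers this to $it\mapsto\delta^{it}L_\omega(a^*a)\delta^{it}$, and uniqueness of analytic extensions evaluated at $z=1/2$ yields \eqref{eq:L-omega commutes2} together with the boundedness of $\overline{\delta^{1/2}L_\omega(a^*a)\delta^{1/2}}$. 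You instead upgrade the commutation with $\{\delta^{it}\}$ to the whole von Neumann algebra $\{\delta^{it}\}''$ (via the ultraweakly closed multiplier sets $\mathcal M_L$, $\mathcal M_R$ and the bicommutant theorem) and then regularise $\delta^{1/2}$ by the bounded functions $g_n$, with the factorisation $a\,g_n(\delta)=\overline{a\delta^{1/2}}\,k_n(\delta)$ supplying the uniform bound that makes the limit passage work; the boundedness of the closure again falls out of the identity rather than being proved directly. Both proofs hinge on exactly the same two inputs — Corollary~\ref{lemma:L-omega-commutes} and the normality of $L_\omega$ from Theorem~\ref{Matt} — so the difference is one of technique: the paper's version is shorter modulo the citation of \cite{stratila-zsido}, while yours is more self-contained (only spectral calculus and von Neumann density) and establishes the stronger intermediate fact that $L_\omega(g(\delta)\,x\,h(\delta))=g(\delta)L_\omega(x)h(\delta)$ for all bounded Borel $g,h$, i.e.\ that $W^*(\delta)$ lies in the two-sided multiplicative domain of $L_\omega$. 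All the delicate points (ultraweak closedness of $\mathcal M_L$, the uniform bound $\|a\,g_n(\delta)\|\le\|\overline{a\delta^{1/2}}\|$, identification of the limiting form with the closure of $\delta^{1/2}L_\omega(a^*a)\delta^{1/2}$ on $\dom(\delta^{1/2})$) are handled correctly.
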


\begin{proof}
Fix $a \in D_\phi$. As $a\delta^{1/2}$ is bounded,
  also $\delta^{1/2}a^*$ is bounded and equal to $(a\delta^{1/2})^*$
  (by Corollary 8.35 of \cite{kus:weights}).
  Hence
  \[
  \delta^{1/2}a^*a\delta^{1/2}
  \]
  is bounded. By Proposition 9.24 of \cite{stratila-zsido}
  (applied to $A = \delta$, $B=\delta\inv$),
  the map
  \[
  it\mapsto \delta^{it}a^*a\delta^{it}
  \]
  has wo-continuous extension to the strip
  $\mathcal{S} := \{z\in\bc:  0\leq\textup{Re} z\leq 1/2\}$ that is analytic
  on the interior of $\mathcal{S}$.
  By Corollary~\ref{lemma:L-omega-commutes},
  \begin{equation} \label{eq:comm}
  L_\omega(\delta^{it}a^*a \delta^{it}) = \delta^{it} L_\omega(a^*a) \delta^{it}.
  \end{equation}
  We intend to apply Proposition 9.24 of \cite{stratila-zsido}
  again, this time to the function defined by the right-hand side of
  \eqref{eq:comm}. We need to show that
  the map
  \[
  it\mapsto \delta^{it} L_\omega(a^*a) \delta^{it}.
  \]
  has a wo-continuous extension to $\mathcal{S}$ that is analytic on
  the interior. Indeed by \eqref{eq:comm}
  it is enough to show that $L_\omega$ is wo-continuous
  (analyticity is clear as $L_\omega$ is bounded).
  We may restrict our considerations to $\Linf(\QG)$
  because (the closure of) $\delta^{z}a^*a \delta^{z}$
  is in $\Linf(\QG)$ for $z\in \mathcal{S}$. As   $\Linf(\QG)$ is in the
  standard form on $\Ltwo(\QG)$,
  a map on $\Linf(\QG)$ is wo-continuous if and only if
  it is normal. But $L_\omega$ is normal by Theorem \ref{Matt}.
  Hence Proposition 9.24 of \cite{stratila-zsido}
  is applicable and shows that the operator
  \[
  \delta^{1/2} L_\omega(a^*a) \delta^{1/2}
  \]
  is bounded (and densely defined as its domain is a core of
  $\delta^{1/2}$).
   The claimed equality follows from \eqref{eq:comm}
  and the uniqueness of analytic extensions.
\end{proof}

\begin{proof}[Proof of Theorem \ref{thm:weightpreserv}]
Let $\omega \in \Pu(\QG)$ be an idempotent state.
By Proposition \ref{Matt2}, $L_\omega$ preserves
the left Haar weight. For the right Haar weight we use the fact that
on the formal level we have the equality
$\psi = \phi(\delta^{1/2}\cdot \delta^{1/2})$. Indeed, with Lemma
\ref{deltabehaviour} in hand, we can repeat the second part of the
proof of Proposition 3.12 in \cite{SSIdem} to conclude the
argument. We outline the main steps for the convenience of the reader:
first, for all $a\in D_\phi$ we have $\psi(a^*a) < \infty$ and
by~\eqref{eq:L-omega commutes2} also  $\psi(L_\omega(a^*a))<\infty$
(see Corollary 8.35 of \cite{kus:weights}). Then applying $\phi$ to
equality~\eqref{eq:L-omega commutes2} gives $\psi(a^*a) = \psi(L_\omega(a^*a))$.
Finally, due to the uniqueness of the right Haar weight and density of
the elements of the form $a^*a$ for $a \in D_\phi$ in $\C_0(\QG)_+$,
we deduce that $\psi=\psi\circ L_\omega$.
\end{proof}

The main theorem is proved with the help of
Theorem \ref{thm:weightpreserv}, but we will also need a few
additional preliminary results.

\begin{lem}\label{idempexp}
Let $\mu \in \Pu(\G)$. Then $\mu$ is an idempotent state if and only
if $L_\mu$ is a conditional expectation if and only if
$L_\mu|_{\C_0(\QG)}$ is a conditional expectation.
\end{lem}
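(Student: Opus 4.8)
The plan is to reduce the whole statement to the single observation that $L_\mu$ is idempotent exactly when $\mu$ is, and then to isolate which idempotent multipliers are conditional expectations. The composition formula \eqref{convolutioncomposition} gives $L_\mu \circ L_\mu = L_{\mu \star \mu}$, so, since the correspondence $\mu \mapsto L_\mu$ is injective by Theorem \ref{Matt}, the relation $L_\mu^2 = L_\mu$ holds if and only if $\mu \star \mu = \mu$. This at once settles the easy directions: a conditional expectation is in particular an idempotent map, so if either $L_\mu$ on $\Linf(\QG)$ or its restriction $L_\mu|_{\C_0(\QG)}$ is a conditional expectation then $L_\mu^2 = L_\mu$ (in the C*-case one promotes this to $\Linf(\QG)$ using normality of $L_\mu$ and weak$^*$-density of $\C_0(\QG)$), whence $\mu \star \mu = \mu$.

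It remains to prove the substantive implication: for an idempotent $\mu$ the idempotent normal unital completely positive map $L_\mu$ is a conditional expectation. First I would record that $L_\mu$ is $*$-preserving and that its range $N := L_\mu(\Linf(\QG))$, being the fixed-point space of a normal idempotent, is weak$^*$-closed and self-adjoint. By Tomiyama's theorem it then suffices to show that $N$ is closed under multiplication, for then $L_\mu$ is a norm-one projection onto a von Neumann subalgebra, hence a (automatically normal, since $L_\mu$ is normal by Theorem \ref{Matt}) conditional expectation. The tool for the multiplicativity is the Kadison--Schwarz inequality $L_\mu(x^*x) \ge L_\mu(x)^* L_\mu(x)$ combined with weight preservation: for $x \in N$ we have $L_\mu(x) = x$, so $L_\mu(x^*x) - x^*x \ge 0$, while applying the faithful Haar weight $\phi$ and using $\phi \circ L_\mu = \phi$ (Proposition \ref{Matt2}) yields $\phi\bigl(L_\mu(x^*x)\bigr) = \phi(x^*x)$; faithfulness of $\phi$ then forces $L_\mu(x^*x) = x^*x$, placing $x$ in the multiplicative domain of $L_\mu$, on which $L_\mu$ is multiplicative, so that $N$ is an algebra.

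The hard part is purely technical and comes from $\phi$ being a weight rather than a state: the displayed argument is immediate only for $x$ with $\phi(x^*x) < \infty$, and since $\phi$ is not tracial one must control $x^*x$ and $xx^*$ at the same time. I would handle this by first checking, again via Kadison--Schwarz and $\phi \circ L_\mu = \phi$, that $L_\mu(\Nphi) \subseteq N \cap \Nphi$, so that $N \cap \Nphi$ is weak$^*$-dense in $N$, and then by bringing in the preservation of the \emph{right} Haar weight $\psi$ from Theorem \ref{thm:weightpreserv} to reach the full multiplicative domain and pass to the weak$^*$-closure $N$. This is precisely the weight bookkeeping already carried out in \cite{SSIdem}, and the second part of the proof of Proposition 3.12 there can be followed essentially verbatim; in fact the cleanest phrasing is to invoke the general fact that a normal unital completely positive idempotent preserving a faithful normal semifinite weight is automatically a conditional expectation.

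Finally, the C*-algebraic equivalence follows by restriction, closing the chain. By Proposition \ref{Matt2} the map $L_\mu$ sends $\C_0(\QG)$ into itself, so once $L_\mu$ is a conditional expectation on $\Linf(\QG)$ its restriction is a norm-one idempotent of $\C_0(\QG)$ onto the C*-subalgebra $N \cap \C_0(\QG)$, hence a conditional expectation by Tomiyama's theorem. Thus idempotency of $\mu$ implies both that $L_\mu$ and that $L_\mu|_{\C_0(\QG)}$ are conditional expectations, while each of the latter two conversely forces $\mu \star \mu = \mu$ by the injectivity argument of the first paragraph, giving the asserted three-fold equivalence.
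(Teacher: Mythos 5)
Your proposal is correct, but the substantive implication is proved by a genuinely different route from the paper's. The paper reduces the multiplicativity of the range of $L_\mu$ to the identity $L_\mu(a)L_\mu(b)=L_\mu(L_\mu(a)b)$, transfers this via normality, weak$^*$-density and the reducing morphism to the universal level, and then invokes the purely algebraic argument of Lemma 2.3 and Theorem 2.4 of \cite{SSIdem}, which rests on the coproduct identities and the quantum cancellation rules in $\C_0^u(\QG)$. You instead stay on $\Linf(\QG)$ and combine the Kadison--Schwarz inequality with $\phi\circ L_\mu=\phi$ and faithfulness of $\phi$ to place a weak$^*$-dense part of the fixed-point set $N$ inside the multiplicative domain, then conclude via Tomiyama (which the paper also uses, but only as the final step). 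Both arguments are sound; yours trades the quantum cancellation laws for standard operator-algebraic facts about completely positive maps and weights, at the price of the weight bookkeeping you flag in your third paragraph. Two remarks on that bookkeeping: first, the one-sided multiplicative domain already suffices --- for $x\in N\cap\Nphi$ the equality $L_\mu(x^*x)=L_\mu(x)^*L_\mu(x)$ gives $L_\mu(yx)=L_\mu(y)L_\mu(x)=yx$ for \emph{all} $y$, so $Nx\subset N$, and separate weak$^*$-continuity of multiplication together with weak$^*$-density of $L_\mu(\Nphi)\subset N\cap\Nphi$ in $N$ closes $N$ under products; hence the left Haar weight alone (Proposition \ref{Matt2}, valid for arbitrary states) does the job and the appeal to Theorem \ref{thm:weightpreserv} is unnecessary (though harmless, as that theorem precedes this lemma). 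Second, a side benefit of the paper's route is that it establishes the conditional-expectation property already for $L^u_\mu$ on $\C_0^u(\QG)$, which your $\Linf$-based argument does not directly give.
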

\begin{proof}
If either $L_\mu$ or $L_\mu|_{\C_0(\QG)}$ is a conditional
expectation, then $L_\mu$ is an idempotent map (in the second case by
weak$^*$-continuity). Thus applying formula
\eqref{convolutioncomposition} and injectivity of the map $\mu \mapsto
L_\mu$ shows that $\mu$ must be an idempotent state.

Assume then that $\mu \in \Pu(\G)$ is an idempotent state. We need to
show that the images of the idempotent maps $L_\mu$ and
$L_\mu|_{\C_0(\G)}$ are algebras (hence C*-algebras).
This is equivalent to showing that for any $a,b \in \Linf(\G)$
\[
L_\mu(a) L_\mu(b) = L_\mu (L_\mu(a) b)
\]
because a norm-one positive projection onto a C*-subalgebra is a
conditional expectation.
Normality of $L_\mu$ and weak$^*$-density of $\C_0(\QG)$ in
$\Linf(\G)$ mean that it suffices to establish the displayed formula
for $a, b\in \C_0(\G)$. As the reducing morphism is a surjective
$^*$-homomorphism and we have the relation \eqref{reducingrelation},
it suffices to show
\[ L_\mu^u(x) L_\mu^u(y) = L_\mu^u (L_\mu\sp u(x) y), \;\;\; x, y \in \C_0^u(\QG).\]
For that however we can follow word by word the proofs in Lemma 2.3
and Theorem 2.4 of \cite{SSIdem}, as they use only the general
properties of the coproducts and the quantum cancellation rules, which
hold also for $\C_0^u(\QG)$ (see \cite{ku}).
\end{proof}

The following observation is due to Matthew Daws. Note that the
positivity is only used to make sense of the intertwining relation
(which otherwise could be formulated in the weak sense).

\begin{lem} \label{lem::C0multipliers}
Let $Z:\C_0(\QG) \to \C_0(\QG)$ be a completely positive nondegenerate
map such that $\Com \circ Z = (Z \ot \id) \circ \Com$. Then there
exists a \textup{(}unital completely positive\textup{)} left multiplier $L$ of
$\Linf(\QG)$ such that $Z=L|_{\C_0(\G)}$.
\end{lem}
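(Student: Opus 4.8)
The plan is to reconstruct the left multiplier $L$ from the given map $Z$ by first passing to the predual picture and then invoking the characterisation in Theorem \ref{Matt}. Since $Z$ is completely positive and nondegenerate on $\C_0(\QG)$, it extends to a normal completely positive map $\widetilde Z$ on the multiplier algebra $\C_b(\QG)$, and hence (by nondegeneracy together with strict continuity) one obtains a well-defined normal completely positive unital map on all of $\Linf(\QG)$; call it $L$. The first task is therefore to check that this strict/normal extension exists and is genuinely unital: nondegeneracy of $Z$ ensures that $Z$ sends an approximate identity of $\C_0(\QG)$ to something strictly convergent to $1$, giving unitality of $L$ on $\Linf(\QG)$.

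Once $L$ is constructed as a normal unital completely positive map on $\Linf(\QG)$, the heart of the argument is to verify the intertwining relation \eqref{commutCom}, namely $\Com \circ L = (L \ot \id) \circ \Com$ on all of $\Linf(\QG)$. By hypothesis this holds for $Z$ on $\C_0(\QG)$, i.e.\ $\Com \circ Z = (Z \ot \id) \circ \Com$ there. I would extend this identity from $\C_0(\QG)$ to $\Linf(\QG)$ by a density-and-normality argument: both $\Com \circ L$ and $(L \ot \id) \circ \Com$ are normal maps $\Linf(\QG) \to \Linf(\QG) \wot \Linf(\QG)$ agreeing on the weak$^*$-dense subalgebra $\C_0(\QG)$, so they must agree everywhere. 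The only subtlety here is normality of $(L \ot \id)$ as a map on the von Neumann algebraic tensor product, which follows since $L$ is normal and the minimal tensor product of normal maps extends normally to $\wot$. With \eqref{commutCom} established for $L$, condition (ii) of Theorem \ref{Matt} is satisfied, so $L$ is exactly one of the left multipliers $L_\mu$ for some $\mu \in \Pu(\QG)$.

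It remains to confirm that this $L$ restricts back to $Z$ on $\C_0(\QG)$. This is immediate from the construction, since $L$ was defined as the normal extension of $Z$, but one should note explicitly that by Proposition \ref{Matt2} the multiplier $L_\mu$ does restrict to a completely positive nondegenerate map on $\C_0(\QG)$, so the restriction $L|_{\C_0(\QG)}$ lands in the right place and equals $Z$ by construction.

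\textbf{The main obstacle} I anticipate is the construction of the normal extension $L$ and the verification that it lands inside $\Linf(\QG)$ rather than merely in some larger algebra of operators: passing from a map on the non-unital C*-algebra $\C_0(\QG)$ to a genuinely normal map on the enveloping von Neumann algebra $\Linf(\QG)$ requires care, because an arbitrary completely positive map need not admit a normal extension. Here nondegeneracy is precisely what rescues us—it lets us extend strictly to $\C_b(\QG) = M(\C_0(\QG))$ and then the intertwining relation, once lifted, forces the extension to be the canonical normal one. The positivity hypothesis, as the remark preceding the lemma notes, is used only to make the intertwining relation $\Com \circ Z = (Z \ot \id) \circ \Com$ meaningful as an equality of honest maps rather than in a purely weak sense.
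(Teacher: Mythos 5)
There is a genuine gap at the very first step: the construction of the normal extension $L$. Nondegeneracy of $Z$ does give a strict extension to the multiplier algebra $\C_b(\QG)=M(\C_0(\QG))$, but $\C_b(\QG)$ is in general a proper subalgebra of $\Linf(\QG)$, and there is no ``hence'' producing a \emph{normal} map on all of $\Linf(\QG)$ from a strictly continuous map on $\C_b(\QG)$. The canonical weak$^*$-continuous extension of $Z$ lives on the bidual $\C_0(\QG)^{**}$ (the universal enveloping von Neumann algebra), not on $\Linf(\QG)$; to land in $\Linf(\QG)$ you need precisely that the adjoint $Z^*\colon\C_0(\QG)^*\to\C_0(\QG)^*$ maps the predual $\Lone(\QG)$ into itself, and this is exactly what your argument never establishes. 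You correctly identify this as the main obstacle, but the proposed resolution (``the intertwining relation, once lifted, forces the extension to be the canonical normal one'') is circular: you cannot lift the intertwining relation to $\Linf(\QG)$ before you have a map defined there, and your density argument for \eqref{commutCom} likewise presupposes the normal $L$ whose existence is the whole point.

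The paper closes this gap by working on the predual side. The intertwining hypothesis says exactly that $Z^*$ is a left module map on the Banach algebra $\C_0(\QG)^*$, i.e.\ $Z^*(\omega_1\star\omega_2)=Z^*(\omega_1)\star\omega_2$. Since $\Lone(\QG)$ is a closed ideal in $\C_0(\QG)^*$, for $\omega_1,\omega_2\in\Lone(\QG)$ the right-hand side lies in $\Lone(\QG)$; since the linear span of such products is dense in $\Lone(\QG)$ and $Z^*$ is bounded, $Z^*$ restricts to a bounded map $\Lone(\QG)\to\Lone(\QG)$. Setting $L:=(Z^*|_{\Lone(\QG)})^*$ then gives the normal map, which restricts to $Z$ on $\C_0(\QG)$ by weak$^*$-density of $\Lone(\QG)$ in $\C_0(\QG)^*$, and which is a left multiplier directly by condition (i) of Theorem \ref{Matt} (so there is no need to pass through condition (ii) at all). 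This predual step is the missing ingredient you would have to supply to make your outline work.
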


\begin{proof}
Recall that $\Lone(\G)$ is a closed, weak$^*$-dense ideal in the
Banach algebra $\C_0(\QG)^*$. The linear span
of the products of elements in $\Lone(\G)$ is dense in $\Lone(\G)$ (see
for example \cite{ds}, although the result dates back already to
\cite{KV}). The assumed intertwining relation on $Z$ implies that for
any $\omega_1, \omega_2 \in \C_0(\QG)^*$ we have $Z^*(\omega_1 \star
\omega_2) = Z^*(\omega_1) \star \omega_2$. In particular for
$\omega_1, \omega_2 \in \Lone(\G)$ we have $Z^*(\omega_1 \star
\omega_2)= Z^*(\omega_1) \star \omega_2 \in \Lone(\G)$ by the ideal
property. As $Z^*$ is bounded, it means $Z^*:\Lone(\QG) \to
\Lone(\QG)$ and we can consider $L:=(Z^*|_{\Lone(\QG)})^*$. It is
clear that $L:\Linf(\QG) \to \Linf(\QG)$ is a normal map, and by
weak$^*$-density of $\Lone(\G)$ in $\C_0(\QG)^*$ it follows that
$L|_{\C_0(\QG)}= Z$. This means in particular that $L$ is unital and
completely positive. Now the fact that $L$ is a left multiplier of
$\Linf(\QG)$ follows by condition (i) in Theorem \ref{Matt}, as $Z^*$
was a left module map.
\end{proof}

\begin{propn} \label{backcst}
Let $\Clg$ be a right-invariant $\psi$-expected
C*-subalgebra of $\C_0(\QG)$. Then there exists
an idempotent state $\omega \in \Pu(\G)$ such that
$\Clg = L_\omega(\C_0(\QG))$.
\end{propn}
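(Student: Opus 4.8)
The plan is to realise the $\psi$-preserving conditional expectation onto $\Clg$ as the restriction of a left multiplier, and then to extract $\omega$ from Theorem~\ref{Matt}. By $\psi$-expectedness there is a $\psi$-preserving conditional expectation $E\col\C_0(\QG)\to\Clg$. Its mere existence forces $\overline{\Clg}^{\,w^*}=:\Dlg$ to be globally invariant under the modular automorphism group of $\psi$, so Takesaki's theorem upgrades $E$ to a normal $\psi$-preserving conditional expectation $E\col\Linf(\QG)\to\Dlg$. Suppose for the moment that this $E$ satisfies
\[ \Com\circ E=(E\ot\id)\circ\Com. \]
Then restricting to $\C_0(\QG)$ and applying Lemma~\ref{lem::C0multipliers} --- $E$ is completely positive, and nondegenerate because a non-zero right-invariant subalgebra is nondegenerate --- produces a left multiplier $L$ of $\Linf(\QG)$ with $L|_{\C_0(\QG)}=E$. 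Theorem~\ref{Matt} gives $\omega\in\Pu(\QG)$ with $L=L_\omega$; since $L_\omega|_{\C_0(\QG)}=E$ is a conditional expectation, Lemma~\ref{idempexp} shows $\omega$ is idempotent, and $L_\omega(\C_0(\QG))=E(\C_0(\QG))=\Clg$, completing the proof.

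The crux, and the step I expect to be the main obstacle, is the intertwining relation displayed above. Slicing the second leg against $\Lone(\QG)$ --- whose elements separate $\Linf(\QG)\wot\Linf(\QG)$ --- shows it to be equivalent to the commutation $E\circ R_\mu=R_\mu\circ E$ for all $\mu\in\Lone(\QG)$, where $R_\mu=(\id\ot\mu)\circ\Com$ is the right multiplier; these are exactly the maps under which right-invariance keeps $\Clg$, hence $\Dlg$, stable. By normality it suffices to treat $\mu$ in the dense $\sharp$-subalgebra $\Lonesharp(\QG)$, for which $R_\mu$ admits a $\psi$-adjoint $R_{\mu^\sharp}$, the right-handed form of the antipodal involution on $\Lonesharp(\QG)$; note $R_{\mu^\sharp}$ is again a right multiplier and so also preserves $\Dlg$. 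Combining the variational description of the $\psi$-preserving expectation, $\psi(E(z)\,w)=\psi(z\,w)$ for $w\in\Dlg$, with this adjoint relation, one finds for $x\in\Linf(\QG)$ and $y\in\Dlg$
\[ \psi\bigl(E(R_\mu(x))\,y\bigr)=\psi\bigl(R_\mu(x)\,y\bigr)=\psi\bigl(x\,R_{\mu^\sharp}(y)\bigr)=\psi\bigl(E(x)\,R_{\mu^\sharp}(y)\bigr)=\psi\bigl(R_\mu(E(x))\,y\bigr). \]
As $E(R_\mu(x))$ and $R_\mu(E(x))$ both lie in $\Dlg$, on which $\psi$ is faithful, the two coincide, giving the desired commutation.

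All the genuine difficulty sits inside the chain of equalities of the previous paragraph and is weight-theoretic, precisely because $\QG$ need not be unimodular: $\psi$ is an unbounded weight, so both the variational identity and the adjoint relation for $R_\mu$ hold only after restricting $x,y$ to suitable cores and controlling square-integrability, in the same spirit as the handling of the modular element $\delta$ in Lemma~\ref{deltabehaviour} and Theorem~\ref{thm:weightpreserv}. I would expect to import this domain bookkeeping, together with the verification that $E$ extends normally with the stated properties, essentially verbatim from the corresponding arguments of \cite{SSIdem}, the new ingredient being the $\delta$-twisted estimates already secured above. Once that is in place, the structural argument of the first two paragraphs yields the idempotent state $\omega$ with $\Clg=L_\omega(\C_0(\QG))$.
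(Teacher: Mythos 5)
Your proposal follows essentially the same route as the paper: establish the commutation $E\circ R_\nu=R_\nu\circ E$ for $\nu$ in the dense subalgebra $\Lonesharp(\QG)$ (which is exactly the content of Proposition~3.3 of \cite{SSIdem} that the paper invokes), deduce the intertwining relation $\Com\circ E=(E\ot\id)\circ\Com$, feed this into Lemma~\ref{lem::C0multipliers} and Theorem~\ref{Matt} to produce $\omega$, and conclude idempotency from Lemma~\ref{idempexp}. The one place you diverge is the opening move: you first extend $E$ to a normal conditional expectation on $\Linf(\QG)$ by asserting that the mere existence of the C*-level $\psi$-preserving expectation forces $\overline{\Clg}^{\,w^*}$ to be invariant under the modular group of $\psi$, and then invoking Takesaki's theorem. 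That assertion is not immediate (Takesaki's criterion concerns normal expectations onto von Neumann subalgebras, and passing from a C*-algebraic expectation to modular invariance of the weak* closure itself requires an argument), and it is also unnecessary: the paper runs the commutation argument directly on $\C_0(\QG)$, and the normal extension of $E$ only appears a posteriori, as the left multiplier $L_\omega$ furnished by Lemma~\ref{lem::C0multipliers}. If you reorganise your weight-theoretic computation so that it takes place at the C*-level (as in Proposition~3.3 of \cite{SSIdem}), the Takesaki detour disappears and your argument matches the paper's.
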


\begin{proof}
Let $E$ be the $\psi$-preserving conditional expectation onto $\Clg$.
Arguing as in Proposition~3.3 of \cite{SSIdem}, we obtain equality
$E R_\nu = R_\nu E$ for all $\nu \in \Lonesharp(\QG)\subset
\C_0(\QG)^*$. Then the argument in the proof of the implication
(iii)$\Longrightarrow$(ii) of Lemma 1.6 in \cite{SSIdem} yields the
commutation relation \eqref{commutCom} on $\C_0(\QG)$. Lemma
\ref{lem::C0multipliers} implies that $E$ is a restriction of
a left multiplier of $\Linf(\QG)$, i.e.\ a map of the form $L_\omega$
for some $\omega \in \Pu(\QG)$.  Finally, by Lemma \ref{idempexp},
$\omega$ must be an idempotent state.
\end{proof}

The next result is an analog of the last one in the context of von
Neumann subalgebras of $\Linf(\QG)$; it is much simpler.

\begin{propn} \label{backvNa}
Let $\Dlg$ be a right-invariant $\psi$-expected von Neumann subalgebra
of $\Linf(\QG)$. Then there exists an idempotent state $\omega \in
\Pu(\G)$ such that $\Dlg = L_\omega(\Linf(\QG))$.
\end{propn}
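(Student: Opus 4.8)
The plan is to mirror the C*-algebraic argument of Proposition \ref{backcst}, but exploiting the fact that in the von Neumann setting everything is already normal, so the technical detour through Lemma \ref{lem::C0multipliers} is unnecessary. Let $E \col \Linf(\QG) \to \Dlg$ be the given normal, $\psi$-preserving conditional expectation. The goal is to show that $E$ coincides with a left multiplier $L_\omega$, and then invoke Lemma \ref{idempexp} to conclude that $\omega$ is idempotent, whence $\Dlg = E(\Linf(\QG)) = L_\omega(\Linf(\QG))$.

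First I would establish that $E$ commutes with all right multipliers $R_\nu$. Since $\Dlg$ is right-invariant and $\psi$-expected, one argues as in Proposition 3.3 of \cite{SSIdem}: the right-invariance condition $\Com(\Dlg) \subset \Dlg \wot \Linf(\QG)$ together with $\psi$-preservation of $E$ forces $E R_\nu = R_\nu E$ for $\nu$ ranging over a suitable dense $*$-subalgebra of $\Lone(\QG)$ (the elements of $\Lonesharp(\QG)$), and by continuity for all $\nu \in \Lone(\QG)$. Next, following the proof of the implication (iii)$\Longrightarrow$(ii) of Lemma 1.6 in \cite{SSIdem}, this commutation with right multipliers is turned into the intertwining relation $\Com \circ E = (E \ot \id) \circ \Com$, i.e.\ \eqref{commutCom}, now directly on $\Linf(\QG)$.

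With the intertwining relation \eqref{commutCom} in hand, I would simply apply Theorem \ref{Matt}: $E$ is a normal unital completely positive map (unitality being automatic for a conditional expectation, and normality being part of the hypothesis that $\Dlg$ is expected as a von Neumann subalgebra) satisfying condition (ii) of that theorem, so by the equivalence (ii)$\iff$(iv) there exists $\omega \in \Pu(\QG)$ with $E = L_\omega$. This is precisely the step that was genuinely harder in the C*-case, where normality was not given for free and Daws' Lemma \ref{lem::C0multipliers} had to be invoked to promote a map on $\C_0(\QG)$ to a normal multiplier of $\Linf(\QG)$; here the hypothesis hands us normality directly, which is exactly why the proposition is ``much simpler.'' Finally, since $E = L_\omega$ is idempotent, Lemma \ref{idempexp} shows $\omega$ is an idempotent state, and $\Dlg = L_\omega(\Linf(\QG))$ as required.

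The main obstacle, to the extent there is one, lies in the first step: verifying carefully that $\psi$-preservation and right-invariance yield the commutation $E R_\nu = R_\nu E$, and then that this upgrades to the full coproduct intertwining \eqref{commutCom}. Both are imported from \cite{SSIdem}, but one should check that the arguments there, originally stated for conditional expectations in a possibly different normalisation, transfer verbatim to the present normal, $\psi$-expected setting on $\Linf(\QG)$; the use of $\Lonesharp(\QG)$ and its involution related to the antipode $S$ is what makes the passage from the $R_\nu$-commutation to \eqref{commutCom} work, and this is the one place where I would want to confirm the density and involutive structure are being used correctly.
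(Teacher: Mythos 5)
Your proposal is correct and follows essentially the same route as the paper: both derive $E R_\nu = R_\nu E$ for $\nu \in \Lonesharp(\QG)$ as in Proposition~3.3 of \cite{SSIdem}, feed this into Theorem~\ref{Matt} to identify $E$ with some $L_\omega$, and finish with Lemma~\ref{idempexp}. The only (harmless) difference is the entry point into Theorem~\ref{Matt}: the paper notes that the commutation relation, extended by density of $\Lonesharp(\QG)$ in $\Lone(\QG)$, says precisely that $E_*$ is a module map on $\Lone(\QG)$ and invokes condition (i), whereas you take the slightly longer detour through Lemma~1.6 of \cite{SSIdem} to recover the coproduct intertwining \eqref{commutCom} and invoke condition (ii) --- equivalent by that same theorem, so nothing is lost.
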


\begin{proof}
Let $E$ be the $\psi$-preserving conditional expectation onto $\Clg$.
Arguing as in Proposition~3.3 of \cite{SSIdem}, we obtain equality
$E R_\nu = R_\nu E$ for all $\nu \in \Lonesharp(\QG)$.  Since
$\Lonesharp(\QG)$ is dense in $\Lone(\QG)$, it follows
that $E_*: \Lone(\QG)\to \Lone(\QG)$ is a left module map.
Hence, by Theorem \ref{Matt}, $E = L_\omega$ for some $\omega \in\Pu(\G)$.
Once again, by Lemma \ref{idempexp}, $\omega$ must be an idempotent state.
\end{proof}

\begin{proof}[Proof of Theorem \ref{maintheorem}]
Given an idempotent state $\omega \in \Pu(\G)$, we associate to it the
algebras $\Clg:=L_\omega(\C_0(\QG))$ and
$\Dlg:=L_\omega(\Linf(\QG))$. They are respectively a nondegenerate
C*-subalgebra of $\C_0(\G)$ and a von Neumann subalgebra of
$\Linf(\G)$. As left multipliers commute with right multipliers,
both $\Clg$ and $\Dlg$ are right invariant in the appropriate
sense. By Lemma \ref{idempexp} the map $L_\omega$ is a normal
conditional expectation
onto $\Dlg$ and when restricted to $\C_0(\G)$ a conditional
expectation onto $\Clg$. By Theorem \ref{thm:weightpreserv} it
preserves both the left and the right Haar weight. This means that to
any idempotent state we can associate a right-invariant expected
C*-subalgebra of $\C_0(\QG)$ and a
right-invariant  expected von Neumann subalgebra of $\Linf(\QG)$.

Conversely, if we are given a right-invariant expected
C*-subalgebra $\Clg$ of $\C_0(\QG)$, Proposition \ref{backcst} shows
that the relevant conditional expectation is of the form $L_\omega$
for some idempotent state $\omega \in \Pu(\G)$.
Proposition \ref{backvNa} gives a similar implication in the
$\Linf(\QG)$-case.

The fact that these correspondences are bijective follows from the
uniqueness of conditional expectations preserving faithful weights and
the injectivity of the map $\mu \mapsto L_\mu$.

This establishes the bijections between objects in (i), (ii) and
(iii). It is clear that working with right multipliers we could
establish similarly bijections between objects in (i), (iv) and~(v).
\end{proof}

\begin{proof}[Proof of Corollary \ref{corollary}]
Given a right-invariant $\psi$-expected C*-subalgebra $\Clg$ of
$\C_0(\QG)$, we see from Proposition \ref{backcst} that it is of the
form $L_\omega(\C_0(\QG))$ for some idempotent state $\omega \in
\Pu(\G)$. Then Theorem \ref{maintheorem} and its proof show that
$\Clg$ is indeed expected. The von Neumann algebra argument is
identical, once we use Proposition \ref{backvNa}.
\end{proof}

\begin{proof}[Proof of Proposition \ref{scaling}]
We begin with the scaling group. Proposition 9.2(2) of \cite{ku} shows
that we have the following equality for any $t \in \br$:
\[
\Com_u \circ \sigma_t^u = (\tau_t^u \ot \sigma_t^u ) \circ \Com_u.
\]
This easily implies that for any $\mu \in \M\sp u(\G)$ and $t \in \br$
we have
\[ L_{\mu \circ \tau_t\sp u}^u =  \sigma_{-t}^u \circ L_\mu^u \circ \sigma^u_t.\]
Applying the reducing morphism and using the
relation $\sigma_t \circ \Lambda = \Lambda \circ \sigma_t^u$,
also shown in \cite{ku}, yields
\begin{equation} L_{\mu \circ \tau_t^u} =  \sigma_{-t} \circ L_\mu
  \circ \sigma_t, \qquad t \in \br.  \label{intertwine}\end{equation}
For an idempotent state $\omega \in \Pu(\G)$, the conditional
expectation $L_\omega$ preserves
the left Haar weight, so by Takesaki's theorem $L_\omega$ commutes with the
modular automorphisms $\sigma_t$. An application of the formula
\eqref{intertwine} yields the equality $ L_{\omega \circ \tau_t^u}=
L_\omega$, which by the injectivity of the map $\mu \mapsto L_\mu$
implies that $\omega = \omega \circ \tau_t^u$ for all $t \in \br$.

The proof that $\omega=\omega \circ S^u$ follows exactly as in
Proposition 2.6 of \cite{SSIdem},  working all the time with the
semi-universal unitary  and using in the last step
Corollary 9.2 of \cite{ku}, which describes the core of $S^u$. Finally
the fact that $\omega$ preserves the unitary antipode follows as the
latter can be expressed as (extension of) the composition of $S^u$ and
$\tau^u_{i/2}$.
\end{proof}

In the following proof, we will use certain results of
\cite{KK}. Apparent differences in terminology stem from the different
choice of conventions regarding multiplicative unitaries in that
paper.

\begin{proof}[Proof of Theorem \ref{Haar}]
(i)$\Longrightarrow$(ii)
This follows as in the proof of the analogous implication of Theorem
3.7 of \cite{SSIdem}, using the fact that the null space of the Haar
state of a compact quantum group is a two-sided ideal, observed
already in \cite{Wor}.

(ii)$\Longrightarrow$(i)
Again one can use the same ideas as in \cite{SSIdem}. We sketch the
outline of the argument: one considers the new C*-algebra
$\blg:=\C_0^u(\QG)/{N_\omega}$ with the canonical quotient map
$\pi:\C_0^u(\QG) \to \blg$. Choosing an element $e \in \C_0^u(\QG)_+$
such that $\omega(e)=1$, we deduce first that $e$ is in the
multiplicative domain of the map $L_\omega^u$ (using the proof of
Lemma \ref{idempexp}) and then that $\pi(L_\omega(e))$ is a unit of
$\blg$. In the next step we establish the fact that there exists a
faithful state $\mu \in \blg^*$ such that $\omega = \mu \circ \pi$.
This in turn implies that there is a well-defined map
$\Delta_\blg: \blg \to \blg \ot \blg$ such that
\[ \Delta_\blg \circ \pi = (\pi \ot \pi) \circ \Com.\]
Since the quantum cancellation properties hold on the $\C_0^u(\QG)$-level
(see \cite{ku}), it follows that the
pair $(\blg, \Com_\blg)$ satisfies the Woronowicz's axioms (with $\mu$
playing the role of the bi-invariant state). Therefore there exists a
compact quantum group $\QH$ such that $\blg = \C(\QH)$. It is then
fairly standard to check (see Proposition 3.5 in \cite{dawsBohr}) that
the map $\pi$ lifts to a quantum group morphism
$\pi^u:\C_0^u(\QG) \to \C^u(\QH)$ (which will still be surjective by
Theorem~3.6 of \cite{DKSS}). Hence $\omega = h_\QH \circ \pi^u$ is a
Haar idempotent.

(i)$\Longrightarrow$(iii)
  If $\omega$ is a Haar idempotent, then
  the right-hand version of Theorem 5.14 of \cite{KK} shows that
the algebra $\Clg^u:=L_\omega^u(\C_0^u(\QG))$ satisfies the universal
symmetry condition, i.e.\
\begin{equation} \label{symmetryuniv}
  \Ww^*(\Clg\ot 1) \Ww  \subset \M(\Clg^u \ot C_0(\hQG)).
\end{equation}
Applying to the inclusion above the reducing morphism $\Lambda$
tensored by the identity and noting that $L_\omega(\C_0(\QG))=
\Lambda(L_\omega^u(\C_0^u(\QG)))$ shows that $L_\omega(\C_0(\QG))$ is
symmetric.

(iii)$\Longrightarrow$(i)
This is essentially contained in the proof of Theorem 5.15 in
\cite{KK}: one first uses Proposition 3.13 of that paper to show that
$\Clg^u:=L_\omega^u(\C_0^u(\QG))$ satisfies the universal symmetry
condition \eqref{symmetryuniv} and then applies Theorem~5.14 of
\cite{KK} to deduce that $\omega$ is a Haar idempotent.
\end{proof}

Finally note that (as mentioned in the proof) Theorem 5.14 of
\cite{KK} provides a version of the correspondence
(i)$\iff$(iii) valid on the universal level.
The coamenable version of this correspondence was first noted in
\cite{Pekka} (with compact quantum subgroups instead of Haar
idempotents).

\end{document}